\documentclass[12pt]{amsart}

\usepackage{amsfonts, amssymb, amsmath, latexsym,  mathtools, amscd,mathrsfs}
\usepackage{hyperref,graphicx}
\usepackage{apxproof}
\usepackage{xcolor}
\usepackage{url}
\usepackage{wrapfig}
\usepackage{subcaption,cleveref}
\usepackage{cite}
\usepackage{tikz}

\newtheorem{Theorem}{Theorem}[section]
\newtheorem{Proposition}[Theorem]{Proposition}
\newtheorem{Lemma}[Theorem]{Lemma}
\newtheorem{Corollary}[Theorem]{Corollary}

\theoremstyle{remark}

\newtheorem{Example}[Theorem]{Example}
\newtheorem{Definition}[Theorem]{Definition}

\newcommand{\CC}{{\mathbb C}}

\newcommand{\TT}{{\mathbb T}}
\newcommand{\ZZ}{{\mathbb Z}}

\newcommand{\calU}{{\mathcal U}}
\newcommand{\calE}{{\mathcal E}}

\newcommand{\calV}{{\mathcal V}}

\newcommand{\Lc}{{\mathcal L}}

\title[Generic Irreducibility of Bloch Varieties]{Generic Irreducibility of Bloch Varieties for Periodic Graph Operators}

\author{Matthew Faust}
\address{Matthew Faust, Department of Mathematics, Michigan State University, East Lansing, MI 48824, USA} \email{mfaust@msu.edu}
\urladdr{https://mattfaust.github.io/}
\author{Wencai Liu}
\address{Wencai Liu, Department of Mathematics,
         Texas A\&M University, College Station, Texas 77843,  USA}
\email{wencail@tamu.edu}
\urladdr{https://sites.google.com/view/wencail/home}

\subjclass[2020]{14M25, 47A75, 81Q10.}
\keywords{Dispersion Polynomial,  Periodic Graph, Quotient Graph, Periodic Graph Operators, Bloch Variety, Reducibility. }
\begin{document}

\begin{abstract}
We give a complete  characterization of generic irreducibility for dispersion polynomials and Bloch varieties of periodic graph operators. More precisely,  we prove that  for a generic choice of edge weights and potentials, the dispersion polynomial/Bloch variety of  a nontrivial periodic graph is irreducible if and only if   the quotient graph is connected.

Our proof uses a strong dichotomy for parameterized Laurent polynomials: reducibility either occurs for every parameter   or   fails on a nonempty Zariski-open set. After establishing this dichotomy, we reduce the problem to minimally connected periodic graphs.
\end{abstract}
\maketitle 


\section{Introduction}

The study of dispersion relations and Bloch varieties has a long history in the literature. We do not attempt to give a complete review here, and instead refer   readers to the survey articles \cite{Kuchment2016Overview,Kuchment2023DispersionSurvey,ShipmanSottile2025} and to \cite{FaustLiu2025RareFlatBands} for  the background and recent development.

Classical irreducibility questions for Bloch varieties go back to the work of  B\"attig , Gieseker, Kn\"orrer,  and Trubowitz, where algebraic-geometric compactification methods were used to study continuous and discrete periodic Schr\"odinger operators \cite{GiesekerKnoerrerTrubowitz1993,KnoerrerTrubowitz1990,Battig1992,BattigKnoerrerTrubowitz1991}.  Recently, the second author initiated a new approach combining  tools from analysis and algebraic geometry, and used it to prove irreducibility for fixed-energy level sets of the Bloch variety, i.e. Fermi varieties, with applications to embedded eigenvalues. Later, irreducibility  of Bloch varieties was studied for more general Schr\"odinger operators on lattice graphs \cite{FillmanLiuMatos2022,FaustLopezGarcia2025,FillmanLiuMatos2024}.

Irreducibility of Bloch varieties plays a crucial role  in the study of spectral properties of periodic graph operators. For instance, it is related to quantum ergodicity and the structure of spectral band functions \cite{Liu2024QuantumErgodicity,McKenzieSabri2023,Liu2022Fermi,KS26}. Irreducibility of Bloch varieties   also appears naturally in inverse spectral problems  for discrete periodic Schr\"odinger operators \cite{Liu2024FermiIsospectrality,LiuBorgToAppear,ChuLyuYang2025, CFK26}. A special case of irreducibility phenomena is the absence of flat bands, which has also been studied extensively for periodic graph operators \cite{KorotyaevSaburova2014,SabriYoussef2023,FaustKachkovskiy2025Absence,FaustLiu2025RareFlatBands}.

This paper continues the study initiated in \cite{FaustLiu2025RareFlatBands}, where flat bands for periodic graph operators were investigated using tools from spectral analysis, algebraic geometry and combinatorics. In particular, our earlier work showed that for connected $\ZZ^d$-periodic graphs, flat bands are generically absent. In the present paper, we study a stronger and more delicate algebraic question: the generic irreducibility of the dispersion polynomial, or equivalently, the generic irreducibility of the corresponding Bloch variety. When the quotient graph has only one vertex, the problem is trivial. So throughout the paper,  we assume  that the quotient graph has at least two vertices.

For a $\ZZ^d$-periodic graph operator, Floquet theory reduces the spectral analysis of the infinite-dimensional operator to the study of a finite matrix $L(z)$ (referred to as the Floquet matrix) whose entries are Laurent polynomials in $z$, where $z\in(\CC^*)^d$ is obtained from the quasimomentum variable $k$ by the change of variables
\[
z_j=e^{2\pi i k_j}, z=(z_1,z_2,\dots,z_d)\text{ and } k=(k_1,k_2,\dots, k_d).
\] The corresponding dispersion polynomial is
\[
D(z,\lambda)=\det(L(z)-\lambda I),
\]
and the Bloch variety is the zero set of $D(z,\lambda)$ in $(\CC^*)^d\times\CC$.

We say a $\ZZ^d$-periodic graph is trivial if there exists a fundamental domain $W$ such that $W$ and $\Gamma\backslash W$ are disconnected. Equivalently, after choosing a suitable fundamental domain, the Floquet matrix is independent of $z$. On the other hand, if the quotient graph is disconnected, then after a suitable ordering of the vertices, the Floquet matrix decomposes into blocks, implying that the dispersion polynomial factors. In this paper, we show that these are the only obstructions: connectedness of the quotient graph and nontriviality  are precisely the natural graph structures that lead to generic irreducibility.

We prove the following theorem.
\begin{Theorem}~\label{Thm:main}
    $D(z,\lambda)$ is generically (i.e. for generic choices of edge weights and potentials) irreducible if and only if the quotient graph of $\Gamma$ is connected and $\Gamma$ is nontrivial.
\end{Theorem}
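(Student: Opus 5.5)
The necessity direction is elementary, so I will dispose of it first. If the quotient graph is disconnected, order the vertices by connected components. Then $L(z)$ is block diagonal for every choice of weights and potentials, so $D(z,\lambda)$ is the product of the block determinants. Each block has $\lambda$-degree at least one, hence both factors are nonconstant and $D$ is reducible for every parameter. If $\Gamma$ is trivial, pick the fundamental domain making $L(z)=L$ independent of $z$. Then $D(z,\lambda)=\det(L-\lambda I)$ is a polynomial of degree $n\ge 2$ in $\lambda$ alone. Over $\CC$ it splits into linear factors, so it is always reducible. Since the Bloch variety and $D$ change only by monomial units under a change of fundamental domain, reducibility is intrinsic.

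For sufficiency I will first establish a dichotomy for parameterized Laurent polynomials. Let $P(z,\lambda;c)$ be a family whose coefficients are polynomials in parameters $c$ (the edge weights and potentials). Then either $P(\cdot;c)$ is reducible for every $c$, or it is irreducible on a nonempty Zariski-open set. To prove it, I would clear denominators by multiplying by a monomial, so that $P$ becomes a genuine polynomial of bounded degree and the Newton polytope is controlled. Note that the Newton polytope can only shrink on a closed set. The condition ``$P(\cdot;c)$ factors as $fg$ with $f,g$ non-monomial of bounded degrees'' is then the image of a morphism from a space of coefficient pairs, possibly twisted by monomial units. This image is constructible by Chevalley's theorem, and taking its union over finitely many degree splittings keeps it constructible. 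A constructible set in the irreducible parameter space is either dense, in which case it contains an open set, or it is contained in a proper closed subset. If the reducible locus is dense, then passing to the generic point shows that $P$ factors over $\overline{\CC(c)}$. A specialization argument then gives a factorization for all $c$, because the set of reducible parameters is closed once the Newton polytope is fixed. That set therefore equals the whole space.

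The dichotomy reduces sufficiency to exhibiting a single parameter value, or a single degenerate specialization, at which $D$ is irreducible. Setting an edge weight to zero corresponds to deleting the edge. Deleting edges is allowed provided the specialized polynomial is still irreducible and has the same Newton-polytope support in the relevant sense, or alternatively provided any factorization would persist under the specialization. So I would delete edges until the graph is minimally connected. This gives a quotient graph that is a spanning tree together with a minimal set of extra edge-orbits that still realize nontriviality, for instance a single edge-orbit carrying a nonzero period vector, possibly after a change of fundamental domain. For such minimal graphs I would compute $D$ directly. With a tree plus one periodic edge between vertices $u$ and $v$, expanding the determinant along that edge gives the form
\[
D=\det(L_0-\lambda I)-|a|^2\det(\cdot)_{\hat u\hat v}\,(z^{\gamma}+z^{-\gamma}-\dots).
\]
This is linear in $w=z^{\gamma}+z^{-\gamma}$ with $\lambda$-dependent coefficients. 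It is then irreducible as long as those two coefficients are coprime in $\lambda$, which holds for generic potentials by interlacing or generic distinct roots. One must also handle the case where $\gamma$ is not primitive.

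The main obstacle will be this specialization step. Irreducibility at a degenerate parameter, with some weights zero, must transfer back to generic parameters, even though the Newton polytope drops. To handle it, I would try to show that a factorization $D=fg$ for generic $c$ has $f,g$ with coefficients in the integral closure of $\CC[c]$. Specializing would then give a nontrivial factorization of the degenerate $D$, because a factor cannot become a monomial unit, since the constant term in $\lambda$ and the leading $\lambda^n$ behave controllably. That would contradict irreducibility of the minimal model.
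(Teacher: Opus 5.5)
\textbf{Where you agree with the paper.} Your necessity argument matches the paper, and so does your overall plan: a dichotomy, then one good parameter obtained by deleting edges down to a spanning tree of the quotient plus one periodic edge orbit, where $D=r(\lambda)+s(\lambda)(z^{\gamma}+z^{-\gamma})$.

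The step you call the main obstacle is actually harmless. Since $D$ is monic in $\lambda$ up to sign, any factorization can be normalized to monic factors of $\lambda$-degrees $r$ and $n-r$. Their supports then lie in a finite set cut out by one fixed polytope containing every possible support, not by the Newton polytope of the specialized $D$. So the reducible locus is Zariski closed on the whole parameter space, degenerate points included, and one irreducible point suffices. The paper gets this from the closed image of $\mathbb P(V_r)\times\mathbb P(W_r)\to\mathbb P(E_r)$. Your integral-closure argument for monic factors would also work.

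\textbf{The gap: the minimal model.} Your claim that $r+s(z^{\gamma}+z^{-\gamma})$ is irreducible once $\gcd(r,s)=1$ is false. Coprimality gives irreducibility of $r+sw$ in $\CC[w,\lambda]$. However, $y\mapsto y+y^{-1}$ is a double cover, not a coordinate change, so irreducibility need not pull back. Indeed
\[
(r_2+s_2y)(r_2+s_2y^{-1})=r_2^2+s_2^2+r_2s_2(y+y^{-1}),
\]
and $\gcd(r_2,s_2)=1$ already forces $\gcd(r_2^2+s_2^2,r_2s_2)=1$.

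This happens inside your class. Take two vertices joined by a tree edge of weight $e$, plus the loop orbit $(u_m,u_{m+1})$ of weight $w$. With $e^2=-w^2$ and equal potentials $v$, you get $D=(v-\lambda+wx)(v-\lambda+wx^{-1})$. Yet $r=(v-\lambda)^2+w^2$ and $s=w(v-\lambda)$ are coprime. For non-primitive exponents there are further splittings into orbits under $x\mapsto\mu x$, which you set aside without a method.

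What is missing is a classification of the possible factorizations of $r+s(x^{a}+x^{-a})$, using the symmetries $x\mapsto x^{-1}$ and $x\mapsto\mu x$ with $\mu^a=1$ (Lemma~\ref{lemliu1}). Any factorization has one of three forms:
\begin{enumerate}
\item a factor in $\lambda$ alone;
\item a product over $l$th roots of unity $\mu$ of three-term factors $r_1+s_1\mu x^{a/l}+t_1\mu^{-1}x^{-a/l}$, with $l\ge 2$;
\item a product of reciprocal pairs $(r_2+s_2\mu x^{a/l})(r_2+s_2\mu x^{-a/l})$.
\end{enumerate}
Each case then needs its own generic exclusion. The reciprocal case makes $D(1,\lambda)$ a perfect square, which contradicts simple spectrum of $L(1)$ for generic potentials. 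The orbit case falls to degree and leading-$v$-term comparisons: for example, $s=\pm s_1^{l}$, and $\prod_j v_j$ would have to be an $l$th power. The $\lambda$-only factor is a flat band. It needs an actual argument rather than an appeal to interlacing; the paper uses its earlier flat-band theorem together with the resultant-peeling Lemma~\ref{Lem:new71}.
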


As a corollary, we prove the corresponding statement for Bloch varieties.
\begin{Corollary}\label{maincor}
  The Bloch variety is generically (i.e. for generic choices of edge weights and potentials)  irreducible if and only if the quotient graph of $\Gamma$ is connected and $\Gamma$ is nontrivial.
\end{Corollary}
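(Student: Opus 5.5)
The plan is to derive Corollary~\ref{maincor} from Theorem~\ref{Thm:main} by comparing irreducibility of the Laurent polynomial $D(z,\lambda)$ with irreducibility of its zero set in $(\CC^*)^d\times\CC$. Two facts will be used throughout. First, the ring $\CC[z_1^{\pm1},\dots,z_d^{\pm1},\lambda]$ is a UFD. Second, $D$ is monic in $\lambda$ of degree $n\ge 2$, where $n$ is the number of vertices of the quotient graph, so neither $D$ nor any factor of positive $\lambda$-degree is a unit. By the Nullstellensatz in this ring, the zero set of a nonunit Laurent polynomial is irreducible exactly when its radical is generated by a single irreducible element.

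\emph{Sufficiency.} Suppose the quotient graph is connected and $\Gamma$ is nontrivial. By Theorem~\ref{Thm:main}, $D$ is irreducible for parameters in a nonempty Zariski-open set, and then its zero set is an irreducible hypersurface. So the Bloch variety is generically irreducible.

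\emph{Necessity.} Here a factorization of $D$ alone is not enough, since $D$ could be a power of a single irreducible polynomial. I will show that for generic parameters $D$ has two factors with no common irreducible factor. Each such factor has a nonempty zero set, because it is monic in $\lambda$ of positive degree. Neither zero set then contains the other, so the Bloch variety is a union of two proper closed subsets and is reducible.

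\emph{Trivial $\Gamma$.} After choosing a suitable fundamental domain, $L$ is independent of $z$. Then $D=\prod_i(\lambda_i-\lambda)$, where the $\lambda_i$ are the eigenvalues of a constant Hermitian matrix. For generic potentials (for instance, distinct potentials with small weights) these eigenvalues are distinct. The Bloch variety is then a union of $n\ge 2$ distinct hyperplanes $\{\lambda=\lambda_i\}$, hence reducible.

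\emph{Disconnected quotient graph.} Order the vertices so that $L(z)$ is block diagonal. Then $D=D_1D_2$, where each $D_j$ is monic in $\lambda$ of positive degree. The condition that $D_1$ and $D_2$ share a common factor is equivalent to the vanishing of $\mathrm{Res}_\lambda(D_1,D_2)$. This resultant is a Laurent polynomial in $z$ whose coefficients are polynomials in the edge weights and potentials. So ``$D_1,D_2$ share a factor'' is a Zariski-closed condition on the parameters.

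It remains to check that this closed set is proper. Specializing all edge weights to $0$ gives $D_1=\prod_{v\in C_1}(V(v)-\lambda)$ and $D_2=\prod_{v\in C_2}(V(v)-\lambda)$, where $C_1,C_2$ are the two vertex blocks and $V$ is the potential. Choosing all potentials distinct makes the resultant a nonzero constant. Hence on a nonempty Zariski-open set of parameters $D_1$ and $D_2$ are coprime, and the Bloch variety is reducible there.

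The only step that needs care is this genericity argument in the disconnected case, namely ruling out a power phenomenon. The resultant argument handles it cleanly, because monicity in $\lambda$ is preserved under specialization of the parameters.
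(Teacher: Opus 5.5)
Your proof is correct. The sufficiency direction is the same as the paper's, but for necessity you take a genuinely different route. The paper argues directly: assuming the Bloch variety is generically irreducible, it uses Lemma~\ref{lemsimple} to get that $D(1,\lambda)$ generically has simple roots. It then applies Lemma~\ref{lem:BV-to-poly} to exclude $D=h^m$, concluding that $D$ itself is generically irreducible, and finishes with the easy ``only if'' half of Theorem~\ref{Thm:main}. So the paper rules out the power phenomenon once, uniformly, through simple roots at the single point $z=1$, and never revisits the two obstructions. You instead prove the contrapositive case by case and show that the Bloch variety is generically \emph{reducible}. For trivial $\Gamma$ you get distinct hyperplanes. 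For a disconnected quotient you show the block factors $D_1,D_2$ are generically coprime, because $\operatorname{Res}_\lambda(D_1,D_2)=0$ is a Zariski-closed condition that fails at zero edge weights with distinct potentials. Your route bypasses Lemma~\ref{lem:BV-to-poly} and the necessity half of Theorem~\ref{Thm:main}, and it yields slightly more information. The failure of generic irreducibility then follows because nonempty Zariski-open subsets of the parameter space intersect, a step you leave implicit. The paper's route is more modular, and the same two lemmas give Theorem~\ref{thm:components-factors} on multiplicity-one factors. One small inaccuracy: weights and potentials are complex, so the constant Floquet matrix in the trivial case need not be Hermitian. Neither the factorization $D=\prod_i(\lambda_i-\lambda)$ over $\CC$ nor the genericity of distinct eigenvalues (a nonvanishing discriminant, as in Lemma~\ref{lemsimple}) uses that property.
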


The proof combines two main novel ideas. Firstly, in Section~\ref{Sec:2} we establish a strong algebraic dichotomy for parameterized Laurent polynomials: reducibility either occurs for every parameter, or else fails on a nonempty Zariski-open subset of the parameter space. Related generic dichotomies (generically true vs generically not true) have appeared in the study of dispersion relations and flat bands \cite{DoKuchmentSottile2020,FaustLiu2025RareFlatBands,FaustKachkovskiy2025Absence}; here we prove a stronger dichotomy (always true vs generically not true). In particular, to prove generic irreducibility, it is enough to find a single parameter value for which the polynomial is irreducible (see Corollary \ref{cor:one_point}).

A further new ingredient is the algebraic geometry framework of the proof. Rather than studying the parameters of potentials and edge weights   in affine varieties directly, we study the corresponding factorizations of the dispersion polynomial in projective varieties. This allows us to exploit the projective nature of the factorization space, in particular the closedness of images of projective morphisms. Our approach  should be easily adapted   to other spectral problems for periodic graph operators, such as flat bands and the irreducibility of Fermi varieties. We believe the new framework provides a robust algebraic-geometric tool in the spectral theory of periodic operators.

Secondly, we reduce the graph-theoretic problem to a special class of minimally connected periodic graphs. 
For this reduced class, the dispersion polynomial has a particular structure. We then rule out all possible factorizations: linear factors are excluded using  ideas from \cite{FaustLiu2025RareFlatBands}, while the remaining factorizations are excluded by degree and specialization arguments.

Several ingredients from \cite{FaustLiu2025RareFlatBands} are used throughout the argument, especially the Floquet-theoretic setup and the algebraic information encoded by the dispersion polynomial. In this sense, the present paper can be viewed as a continuation of that work,  shifting from flat-band phenomena to generic irreducibility.

Finally, we  would like to remark that  in  \cite{FaustLiu2025RareFlatBands}, we  characterize the periodic graphs for which flat bands are generically absent with respect to both potentials and edge weights. Later, in \cite{FaustKachkovskiy2025Absence}, the first author and Kachkovskiy proved the corresponding result for fixed nonzero edge weights and generic potentials by a different method based on perturbation arguments.
This naturally raises the question of whether the irreducibility results in the present paper can also be strengthened from generic potentials and edge weights to fixed nonzero edge weights and generic potentials. At the end of this paper, we explicitly construct examples showing that this is not possible in general.

The rest of the paper is organized as follows. In Section~\ref{Sec:2} we prove the strong dichotomy for a family of Laurent  polynomials. In Section~\ref{Sec:MainRes} we  list the background of  $\ZZ^d$-periodic graphs and $\ZZ^d$-periodic graph operators. In Section~\ref{Sec:reduce}, we reduce Theorem~\ref{Thm:main} to a more specialized irreducibility statement. The remaining sections are devoted to the proof of that reduced theorem.

\section{Strong dichotomy} \label{Sec:2}
In this section, our Laurent polynomials do not need to come from dispersion of graphs, which may be of independent interest.
We recall the meaning of ``generic'' used throughout the paper. Let $X=\CC^k$ be an affine parameter space with coordinates $\xi_1,\dots,\xi_k$. A subset $Z\subset X$ is called \emph{Zariski closed} if there exist (finitely many) polynomials
\[
P_1,\dots,P_m\in \CC[\xi_1,\dots,\xi_k]
\]
such that
\[
Z=\{\xi\in \CC^k: P_1(\xi)=\cdots=P_m(\xi)=0\}.
\]
A subset $U\subset X$ is called \emph{Zariski open} if its complement $X\setminus U$ is Zariski closed.

We say that a property holds \emph{generically} on $\CC^k$ if it holds on a nonempty Zariski-open subset of $\CC^k$.

Denote by $\xi=(\xi_1,\xi_2,\dots,\xi_k)$, $z=(z_1,z_2,\dots, z_d)$ and $z^{\pm}=(z_1^\pm,z_2^\pm,\dots, z_d^\pm)$.

Consider the family
\[
p(\xi;z,\lambda)
=
\lambda^n+\sum_{j=0}^{n-1}\sum_{\ell\in\mathbb Z^d} f_j^\ell(\xi)\,z^\ell\lambda^j
\in
\mathbb C[\xi,z^\pm,\lambda],
\]
where all but finitely many of the polynomials
$f_j^\ell\in\mathbb C[\xi]$ are zero, and
\[
z^\ell=z_1^{\ell_1}\cdots z_d^{\ell_d},
\qquad
\ell=(\ell_1,\dots,\ell_d)\in\mathbb Z^d.
\]
For $\xi\in\mathbb C^k$, we write
\[
p_\xi(z,\lambda)=p(\xi;z,\lambda).
\]

\begin{Theorem}\label{dich}
Let
\[
\mathcal R
:=
\{\xi\in\mathbb C^k : p_\xi(z,\lambda)\text{ is reducible in }\CC[z^\pm,\lambda]\}.
\]
Then $\mathcal R$ is Zariski closed in $\mathbb C^k$. Consequently, exactly one
of the following   holds:
\begin{enumerate}
\item $p_\xi$ is reducible for every $\xi\in\mathbb C^k$;
\item for generic $\xi\in \mathbb C^k$, $p_\xi$ is irreducible.
\end{enumerate}
\end{Theorem}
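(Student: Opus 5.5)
We prove the first assertion by realizing $\mathcal R$ as the image of an incidence variety under a proper projection. The dichotomy then follows, since a Zariski closed subset of the irreducible space $\mathbb C^k$ is either all of $\mathbb C^k$ or has nonempty Zariski-open complement.

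\textbf{Step 1: normal form of factorizations.} Since $p_\xi$ is monic in $\lambda$, any factor of $\lambda$-degree $0$ divides the leading coefficient $1$, so it is a unit $cz^m$. Hence $p_\xi$ is reducible in $\CC[z^\pm,\lambda]$ if and only if $p_\xi=gh$ with $g,h$ monic in $\lambda$ of degrees $a,b\ge 1$ and $a+b=n$. Monicity fixes $g$ and $h$ uniquely, with no monomial ambiguity. Next we bound their supports. Let $P\subset\mathbb R^{d+1}$ be the convex hull of the (finite, $\xi$-independent) set of exponents $(\ell,j)$ appearing in $p$, together with $(0,n)$. Since Newton polytopes are additive, $N(g)+N(h)=N(p_\xi)\subset P$. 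Since $(0,b)\in N(h)$, we get $N(g)\subset P-(0,b)$, and similarly $N(h)\subset P-(0,a)$. So for each $a$ there are fixed finite sets $S_a,T_a$ of lattice points containing the supports of $g$ and $h$.

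\textbf{Step 2: incidence variety and Chevalley.} For each $1\le a\le n-1$, let $V_a\subset\mathbb C^k\times\mathbb C^{S_a}\times\mathbb C^{T_a}$ be the set of $(\xi,g,h)$, with $g,h$ monic in $\lambda$ of the prescribed degrees, such that $p_\xi=gh$. Comparing coefficients shows that $V_a$ is defined by polynomial equations. By Step 1, $\mathcal R=\bigcup_a \pi(V_a)$, where $\pi$ is the projection to $\xi$. By Chevalley's theorem each $\pi(V_a)$ is constructible. A constructible set that is closed in the Euclidean topology is Zariski closed, because the Zariski and Euclidean closures of a constructible set coincide. So it suffices to show that each $\pi(V_a)$ is Euclidean closed, and for this it suffices that $\pi|_{V_a}$ is proper: the fibers over a bounded set of $\xi$ stay bounded.

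\textbf{Step 3: properness (main step).} Fix finitely many points $z^{(1)},\dots,z^{(M)}\in(\CC^*)^d$ such that a Laurent polynomial in $z$ with support in the $z$-projection of $S_a$ (resp. $T_a$) is determined linearly by its values at these points. Such points exist, for instance generic points, by invertibility of a generalized Vandermonde matrix. For each $i$, the polynomial $g(z^{(i)},\lambda)$ is monic in $\lambda$ and divides $p_\xi(z^{(i)},\lambda)$. Its roots are therefore roots of $p_\xi(z^{(i)},\cdot)$, and these are bounded by Cauchy's bound $1+\max|{\rm coeff}|$. The coefficients of $p_\xi(z^{(i)},\cdot)$ are continuous in $\xi$. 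Hence the coefficients of $g(z^{(i)},\lambda)$, as elementary symmetric functions of the roots, are bounded uniformly for $\xi$ in a compact set. By linear interpolation, the coefficients of $g$, and likewise of $h$, are bounded. Thus $\pi|_{V_a}$ is proper, so its image is closed, which completes the argument.

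The step needing most care is this properness/closedness argument, together with the passage from Euclidean closed to Zariski closed. The paper phrases this via projective varieties: one compactifies the coefficient space of the factors and uses the fact that images of projective morphisms are closed. The bound in Step 3 is exactly what shows that no factorizations escape to infinity, so either formulation works.
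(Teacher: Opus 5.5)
Your proof is correct. Step 1 coincides with the paper's Lemmas~\ref{lem:monic_normalization} and~\ref{lem:support_bound}, but you prove closedness by a genuinely different route.

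\textbf{The paper's route.} It stays entirely algebraic:
\begin{enumerate}
\item It lets $V_r$ and $W_r$ be the coefficient spaces of the two factors, with supports in $A_r$ and $B_r$.
\item Multiplication induces a morphism $\mathbb P(V_r)\times\mathbb P(W_r)\to\mathbb P(E_r)$. This is defined everywhere because $\CC[z^\pm,\lambda]$ is a domain. Its image $Y_r$ is closed because the source is projective.
\item It writes $\mathcal R_r=\Phi_r^{-1}(Y_r)$, where $\Phi_r\colon\xi\mapsto[p_\xi]$ is a morphism, well defined because the $\lambda^n$-coefficient is $1$.
\end{enumerate}
Projectivizing makes scaling irrelevant. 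The $\lambda$-degree bounds built into $A_r$ and $B_r$ force any product $[FG]=[p_\xi]$ to be a split of type $(r,n-r)$. So no bound on the factors is ever needed.

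\textbf{Your route.} You work over $\CC^k$ with the affine incidence variety and invoke Chevalley's theorem. You then use the fact that Zariski and Euclidean closures of constructible sets coincide. Finally, you establish properness by hand via Cauchy's root bound and interpolation at finitely many points $z^{(i)}$.

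\textbf{Comparison.} Your route gives an explicit, quantitative form of ``no factorization escapes to infinity'': the monic factors have coefficients locally bounded in $\xi$, and this is exactly where monicity enters. It costs reliance on the complex topology and on two nontrivial theorems. The paper's argument is shorter and works verbatim over any algebraically closed field.
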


We prove Theorem~\ref{dich} through a sequence of lemmas and a proposition.

For a nonzero Laurent polynomial
\[
F(z,\lambda)=\sum_{(\alpha,i)} c_{\alpha,i} z^\alpha \lambda^i\in \CC[z^\pm,\lambda],
\]
we denote by $\operatorname{Supp}(F)\subset \mathbb Z^d\times\mathbb Z_{\ge 0}$
its support and by
\[
N(F)=\operatorname{conv}(\operatorname{Supp}(F))
\subset \mathbb R^d\times\mathbb R
\]
its Newton polytope. We shall use the standard identity
\[
N(FG)=N(F)+N(G)
\]
for nonzero $F,G\in \CC[z^\pm,\lambda]$, where $+$ one the right-hand side is  the
Minkowski sum.

Clearly, since $p_\xi$ is monic, we have the following lemma.
\begin{Lemma}\label{lem:monic_normalization}
Suppose that $p_\xi$ is reducible in $\CC[z^\pm,\lambda]$. Then there exists an integer
$r$ with $1\le r\le n-1$ and a factorization
\[
p_\xi=hg
\]
such that $h,g\in \CC[z^\pm,\lambda]$ are monic in $\lambda$ and
\[
\deg_\lambda h=r,
\qquad
\deg_\lambda g=n-r.
\]
\end{Lemma}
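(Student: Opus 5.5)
Suppose $p_\xi=HG$ is a nontrivial factorization in $\CC[z^\pm,\lambda]$, meaning neither $H$ nor $G$ is a unit. The units of $\CC[z^\pm,\lambda]$ are exactly the monomials $cz^\alpha$ with $c\in\CC^*$ and $\alpha\in\ZZ^d$. The plan is to normalize $H$ and $G$ so that both are monic in $\lambda$, and then to check that neither normalized factor has $\lambda$-degree $0$.

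\emph{Leading coefficients.} Write $H=\sum_{i=0}^{a}H_i(z)\lambda^i$ and $G=\sum_{i=0}^{b}G_i(z)\lambda^i$ with $H_a,G_b\neq0$. Since $\CC[z^\pm]$ is a domain, comparing the top $\lambda$-coefficients gives $H_aG_b=1$ and $a+b=n$. Hence $H_a$ is a unit of $\CC[z^\pm]$. To see this concretely, note that $N(H_a)+N(G_b)=N(1)$ is a single point, so both $N(H_a)$ and $N(G_b)$ are points. Therefore $H_a=cz^\alpha$ and $G_b=c^{-1}z^{-\alpha}$.

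\emph{Normalization.} Set $h=c^{-1}z^{-\alpha}H$ and $g=cz^{\alpha}G$. Then $hg=HG=p_\xi$, both $h$ and $g$ are monic in $\lambda$, $\deg_\lambda h=a$ and $\deg_\lambda g=b$.

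\emph{Excluding degree $0$.} Suppose $a=0$. Then $h$ is monic of $\lambda$-degree $0$, so $h=1$. But then $H=cz^\alpha$ is a unit, contradicting the nontriviality of the factorization. The same argument applies if $b=0$. So $1\le a\le n-1$, and taking $r=a$ proves the lemma.

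There is no real obstacle here. The only point needing care is that reducibility in the Laurent ring is taken modulo monomial units, and that a factor of $\lambda$-degree $0$ must be a unit. Both follow from the fact that the leading coefficient $H_a$ divides $1$ in $\CC[z^\pm]$.
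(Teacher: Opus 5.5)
Your proof is correct and follows the reasoning the paper intends. The paper gives no written proof beyond noting that the lemma is clear because $p_\xi$ is monic. Your steps supply exactly the omitted details: comparing top $\lambda$-coefficients gives $H_aG_b=1$, so both are monomial units; you rescale by $c^{\mp1}z^{\mp\alpha}$; and a monic factor of $\lambda$-degree $0$ must equal $1$, which would make the original factor a unit.
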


\begin{Lemma}\label{lem:support_bound}
Let
\[
S
:=
\{(\mathbf 0,n)\}
\cup
\{(\ell,j)\in\mathbb Z^d\times\{0,\dots,n-1\}: f_j^\ell\not\equiv 0\},
\]
where $\mathbf 0$ denotes the zero vector in $\mathbb Z^d$, and define
\[
N:=\operatorname{conv}(S)\subset \mathbb R^d\times\mathbb R.
\]
For each $r$ with $1\le r\le n-1$, let
\[
A_r
=
\bigl(N-(\mathbf 0,n-r)\bigr)\cap
\bigl(\mathbb Z^d\times\{0,\dots,r\}\bigr),
\]
\[
B_r
=
\bigl(N-(\mathbf 0,r)\bigr)\cap
\bigl(\mathbb Z^d\times\{0,\dots,n-r\}\bigr).
\]
Then $A_r$ and $B_r$ are finite. Moreover, if
\[
p_\xi=hg
\]
with $h,g\in \CC[z^\pm, \lambda]$ monic and
\[
\deg_\lambda h=r,
\qquad
\deg_\lambda g=n-r,
\]
then
\[
\operatorname{Supp}(h)\subset A_r
\text{ and }
\operatorname{Supp}(g)\subset B_r.
\]
\end{Lemma}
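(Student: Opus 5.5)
The plan is to combine the Minkowski-sum identity $N(FG)=N(F)+N(G)$ with the fact that monic factors contain the vertices $(\mathbf 0,r)$ and $(\mathbf 0,n-r)$.

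\emph{Finiteness.} $S$ is a finite set, so $N$ is a compact polytope. Any translate of $N$ is then bounded, and a bounded subset of the lattice $\mathbb Z^d\times\{0,\dots,r\}$ is finite. Hence $A_r$ and $B_r$ are finite.

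\emph{Support containment.} Suppose $p_\xi=hg$ with $h,g$ monic of $\lambda$-degrees $r$ and $n-r$. Every monomial of $p_\xi$ has exponent in $S$: the leading term $\lambda^n$ contributes $(\mathbf 0,n)$, and the other terms $f_j^\ell(\xi)z^\ell\lambda^j$ can be nonzero only when $f_j^\ell\not\equiv 0$. Therefore $\operatorname{Supp}(p_\xi)\subset S$, and so $N(p_\xi)\subset N$. Since $h$ and $g$ are nonzero, $N(h)+N(g)=N(p_\xi)\subset N$.

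Because $g$ is monic of degree $n-r$ in $\lambda$, its leading coefficient in $\lambda$ is exactly $1$. So $(\mathbf 0,n-r)\in\operatorname{Supp}(g)\subset N(g)$. Then for every $a\in\operatorname{Supp}(h)$ we have $a+(\mathbf 0,n-r)\in N(h)+N(g)\subset N$, that is, $a\in N-(\mathbf 0,n-r)$. In addition $a\in\mathbb Z^d\times\{0,\dots,r\}$, because $h$ is a Laurent polynomial in $z$ and a polynomial in $\lambda$ of degree $r$. Hence $a\in A_r$.

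The symmetric argument uses $(\mathbf 0,r)\in\operatorname{Supp}(h)$ and gives $\operatorname{Supp}(g)\subset B_r$.

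The only point that needs care is the Minkowski identity for Laurent polynomials. It is standard: for any linear functional, the extreme monomials of $F$ and $G$ multiply to a term that cannot cancel, so the vertices of $N(F)+N(G)$ all appear in $FG$. Even so, only the easy inclusion $\operatorname{Supp}(h)+\operatorname{Supp}(g)\supset$ is not needed. What the argument actually requires is $N(h)+N(g)\subset N(p_\xi)$, and this direction relies on the non-cancellation at vertices. That is the one step to state explicitly, by citing the identity recalled before the lemma.
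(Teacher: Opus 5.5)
Your proposal is correct and follows the paper's proof essentially line for line. Both arguments use $N(h)+N(g)=N(p_\xi)\subset N$, use monicity to get $(\mathbf 0,n-r)\in N(g)$ and hence $\operatorname{Supp}(h)\subset N-(\mathbf 0,n-r)$, and finish with the $\lambda$-degree bound, with the symmetric argument for $g$. Your closing remark is garbled as written, but you correctly identify that the direction actually needed is $N(h)+N(g)\subset N(hg)$ (non-cancellation at vertices), which is exactly what the cited Minkowski identity provides.
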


\begin{proof}
Since $N$ is a polytope, the sets $A_r$ and $B_r$ are finite.

For every specialization $\xi\in\mathbb C^k$, the support of $p_\xi$ is contained
in $S$, hence
\[
N(p_\xi)\subset N.
\]
Now assume that
\[
p_\xi=hg
\]
with $h,g$ monic of $\lambda$-degrees $r$ and $n-r$. Then
\[
N(h)+N(g)=N(hg)=N(p_\xi)\subset N.
\]
Since $g$ is monic of degree $n-r$, the point $(\mathbf 0,n-r)$ belongs to $N(g)$. Therefore
\[
N(h)+(\mathbf 0,n-r)\subset N(h)+N(g)\subset N,
\]
so
\begin{equation}\label{gapr191}
N(h)\subset N-(\mathbf 0,n-r).
\end{equation}

Because $\deg_\lambda h=r$, we also have
\begin{equation}\label{gapr192}
\operatorname{Supp}(h)\subset \mathbb Z^d\times\{0,\dots,r\}.\end{equation}
 
By \eqref{gapr191}  and \eqref{gapr192},  one has 
\[
\operatorname{Supp}(h)\subset A_r.
\]

The proof for $\operatorname{Supp}(g)\subset B_r$ is similar.
\end{proof}

\begin{Proposition}\label{prop:closed_degree_split}
For each integer $r$ with $1\le r\le n-1$, the set
\[
\mathcal R_r
=
\Bigl\{
\xi\in\mathbb C^k :
p_\xi \text{ admits a factorization } p_\xi=fg
\text{ with }
\deg_\lambda f=r,\ \deg_\lambda g=n-r
\Bigr\}
\]
is Zariski closed in $\mathbb C^k$.
\end{Proposition}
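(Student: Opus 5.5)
The plan is to realize $\mathcal R_r$ as the image of a closed subset of $\mathbb C^k\times\mathbb P^M$ under the projection to $\mathbb C^k$; closedness then follows from properness of $\mathbb P^M\to \mathrm{pt}$. First we normalize. If $p_\xi=fg$ with $\deg_\lambda f=r$, then the leading $\lambda$-coefficients of $f$ and $g$ multiply to $1$. In $\CC[z^\pm]$ this forces them to be units, i.e. monomials $cz^\alpha$. After rescaling $f$ by $(cz^\alpha)^{-1}$ and $g$ by $cz^\alpha$, we may assume $f,g$ are monic, as in Lemma~\ref{lem:monic_normalization}. By Lemma~\ref{lem:support_bound}, $\operatorname{Supp}(f)\subset A_r$ and $\operatorname{Supp}(g)\subset B_r$, and these are finite sets independent of $\xi$. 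So $\mathcal R_r$ is the set of $\xi$ for which there exist coefficient vectors $a\in\CC^{A_r}$, $b\in\CC^{B_r}$ with $a_{(\mathbf 0,r)}=1$ and $b_{(\mathbf 0,n-r)}=1$, other coefficients at height $r$ (resp.\ $n-r$) equal to zero, and
\[
\Bigl(\sum a_{\beta}z^{\beta'}\lambda^{\beta_{d+1}}\Bigr)\Bigl(\sum b_{\gamma}z^{\gamma'}\lambda^{\gamma_{d+1}}\Bigr)=p_\xi .
\]
Comparing coefficients gives finitely many polynomial equations in $(\xi,a,b)$. This exhibits $\mathcal R_r$ as the projection of an affine variety $V_r\subset\CC^k\times\CC^{A_r}\times\CC^{B_r}$. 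Projections of affine varieties are in general only constructible, so more is needed.

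The key step, which I expect to be the main obstacle, is to replace the affine coefficient space by a projective one without creating spurious solutions. I would work in $\CC^k\times\mathbb P(\CC^{A_r})\times\mathbb P(\CC^{B_r})$ with homogeneous coordinates $[a]$, $[b]$, and drop the monic normalization. Define $W_r$ as the set of $(\xi,[a],[b])$ satisfying the bihomogeneous equations
\[
(f_a g_b)_{(\ell,j)}\;=\;a_{(\mathbf 0,r)}b_{(\mathbf 0,n-r)}\,(p_\xi)_{(\ell,j)}\quad\text{for all }(\ell,j),
\]
together with $a_{(\ell,r)}=0$ for $\ell\neq\mathbf 0$ and $b_{(\ell,n-r)}=0$ for $\ell\ne\mathbf 0$. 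These equations are bihomogeneous of bidegree $(1,1)$ in $(a,b)$, so $W_r$ is Zariski closed. Its image under the projection to $\CC^k$ is closed, because $\mathbb P\times\mathbb P$ is complete and so the projection $\CC^k\times\mathbb P\times\mathbb P\to\CC^k$ is a closed map.

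It then remains to show that this image equals $\mathcal R_r$, and the danger is points at infinity with $a_{(\mathbf 0,r)}b_{(\mathbf 0,n-r)}=0$. At such a point the equations say $f_ag_b=0$ as a polynomial, which is consistent since one factor has zero top coefficient. To exclude these, I add the equation that the $\lambda^n$-coefficient of $f_ag_b$ equals $a_{(\mathbf 0,r)}b_{(\mathbf 0,n-r)}$; this holds automatically given the vanishing of the other top coefficients, so it does not help. Instead I use that $\CC[z^\pm,\lambda]$ is a domain. Suppose $f_ag_b=a_{(\mathbf 0,r)}b_{(\mathbf 0,n-r)}\,p_\xi$ with $a,b$ nonzero. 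Then $f_a g_b\neq0$, so the right side is nonzero, which forces $a_{(\mathbf 0,r)}b_{(\mathbf 0,n-r)}\neq0$. Rescaling then gives monic $f,g$ with $fg=p_\xi$ and $\lambda$-degrees exactly $r$ and $n-r$. Conversely, every $\xi\in\mathcal R_r$ lifts to a point of $W_r$ by the normalization above. Hence $\mathcal R_r=\pi(W_r)$ is Zariski closed.

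Finally, I would check the degenerate case $\mathcal R_r=\emptyset$ (then $W_r$ is empty and the image is still closed). I would also record that the identity comparing coefficients must range over the finite set $A_r+B_r\supset S$, so that the system of equations is genuinely finite.
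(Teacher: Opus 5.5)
Your argument is correct and is essentially the paper's. Both rest on the support bound of Lemma~\ref{lem:support_bound} and on completeness of $\mathbb P(\CC^{A_r})\times\mathbb P(\CC^{B_r})$: the paper writes $\mathcal R_r=\Phi_r^{-1}(Y_r)$, where $Y_r$ is the closed image of the multiplication morphism and $\Phi_r(\xi)=[p_\xi]$, while you project an incidence variety in $\CC^k\times\mathbb P(\CC^{A_r})\times\mathbb P(\CC^{B_r})$ to $\CC^k$, which is the same elimination step.

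One correction to your closing remark: $S\not\subset A_r+B_r$ in general. For example, with $p_\xi=\lambda^2+\xi z$ and $r=1$ one gets $A_1+B_1=\{(\mathbf 0,2)\}$. So the coefficient equations must range over $S\cup(A_r+B_r)$, which is the paper's $C_r$. Restricting them to $A_r+B_r$ would drop the constraints coming from the remaining coefficients of $p_\xi$ and enlarge the projected set; in this example it would give all of $\CC$ instead of $\mathcal R_1=\{0\}$.
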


\begin{proof}
Fix $r$. Let $V_r$ be the finite-dimensional $\mathbb C$-vector space with
basis
\[
\{z^\alpha\lambda^i : (\alpha,i)\in A_r\},
\]
and let $W_r$ be the finite-dimensional $\mathbb C$-vector space with basis
\[
\{z^\beta\lambda^j : (\beta,j)\in B_r\}.
\]
Let
\[
C_r=S\cup (A_r+B_r),
\qquad
A_r+B_r=\{u+v : u\in A_r,\ v\in B_r\},
\]
and let $E_r$ be the finite-dimensional $\mathbb C$-vector space with basis
\[
\{z^\gamma\lambda^m : (\gamma,m)\in C_r\}.
\]

Multiplication defines a bilinear map
\[
V_r\times W_r\longrightarrow E_r.
\]
Clearly, the bilinear map induces a morphism
\[
m_r:\mathbb P(V_r)\times \mathbb P(W_r)\longrightarrow \mathbb P(E_r),
\qquad
([F],[G])\longmapsto [FG].
\]
Because $\mathbb P(V_r)\times\mathbb P(W_r)$ is projective, its image
\[
Y_r=m_r\bigl(\mathbb P(V_r)\times \mathbb P(W_r)\bigr)
\]
is Zariski closed in $\mathbb P(E_r)$.

On the other hand, the support of $p_\xi$ is contained in $S\subset C_r$ for all
$\xi\in\mathbb C^k$. Since the coefficient of $\lambda^n$ in $p_\xi$ is always $1$,
the coefficient vector of $p_\xi$ is never zero. Therefore  the following map is well defined: 
\[
\Phi_r:\mathbb C^k\longrightarrow \mathbb P(E_r),
\qquad
\xi\longmapsto [p_\xi].
\]

By  Lemma~\ref{lem:support_bound},  
\[
\mathcal R_r=\Phi_r^{-1}(Y_r).
\]

Hence $\mathcal R_r$ is Zariski closed.
\end{proof}

\begin{proof}[\bf Proof of Theorem~\ref{dich}]
By Lemma~\ref{lem:monic_normalization}, every nontrivial factorization of $p_\xi$
determines an integer $r$ with $1\le r\le n-1$ such that $\xi\in\mathcal R_r$.
Hence
\[
\mathcal R=\bigcup_{r=1}^{n-1}\mathcal R_r.
\]
By Proposition~\ref{prop:closed_degree_split}, each $\mathcal R_r$ is Zariski
closed in $\mathbb C^k$. Therefore $\mathcal R$ is Zariski closed as a finite
union of closed sets.
So, either
$\mathcal R=\mathbb C^k$, or $\mathcal R$ is a proper closed subset. In the
latter case,
\[
U=\mathbb C^k\setminus \mathcal R
\]
is a nonempty Zariski-open subset, and by definition $p_\xi$ is irreducible for
every $\xi\in U$.
\end{proof}

\begin{Corollary}\label{cor:one_point}
If there exists $\xi_0\in\mathbb C^k$ such that $p_{\xi_0}$ is irreducible in
$\CC[z^\pm,\lambda]$, then $p_\xi$ is irreducible for generic $\xi\in\mathbb C^k$.
\end{Corollary}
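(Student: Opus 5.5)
This is a direct consequence of Theorem~\ref{dich}, so the plan is to apply its dichotomy. We know from Theorem~\ref{dich} that $\mathcal R$ is Zariski closed in $\mathbb C^k$. The hypothesis says $\xi_0\notin\mathcal R$, so $\mathcal R\neq\mathbb C^k$. This rules out alternative (1), under which $p_\xi$ would be reducible for every $\xi$, including $\xi_0$.

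Now set $U=\mathbb C^k\setminus\mathcal R$. It is Zariski open because $\mathcal R$ is Zariski closed, and it is nonempty because it contains $\xi_0$. By the definition of $\mathcal R$, $p_\xi$ is irreducible in $\CC[z^\pm,\lambda]$ for every $\xi\in U$. This is exactly the statement that $p_\xi$ is irreducible for generic $\xi$, in the sense of ``generic'' fixed at the start of this section.

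No step here is a real obstacle, since all the substantive work is in Proposition~\ref{prop:closed_degree_split}. The only point to check is that the corollary uses the same notion of irreducibility as Theorem~\ref{dich}. Both refer to the ring $\CC[z^\pm,\lambda]$, in which monomials $z^\alpha$ are units, and in both cases $p_{\xi}$ is monic of degree $n$ in $\lambda$. So the hypothesis on $\xi_0$ says precisely that $\xi_0\notin\mathcal R$, and no further translation is needed.
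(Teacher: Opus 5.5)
Your proof is correct and is the same as the paper's. Since $\xi_0\notin\mathcal R$, we get $\mathcal R\neq\mathbb C^k$, and the Zariski-closedness of $\mathcal R$ from Theorem~\ref{dich} then makes $\mathbb C^k\setminus\mathcal R$ a nonempty Zariski-open set on which $p_\xi$ is irreducible.
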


\begin{proof}
If $p_{\xi_0}$ is irreducible, then $\xi_0\notin\mathcal R$, so
$\mathcal R\neq\mathbb C^k$. The conclusion now follows from
Theorem~\ref{dich}.
\end{proof}

 \section{ Basics}~\label{Sec:MainRes}
 
\subsection{Periodic Graphs and Discrete Periodic Operators}
A graph $\Gamma = (\mathcal V, \mathcal E) $ is said to be  $\mathbb{Z}^d $-periodic if:

	1.	There exists a free action of  $\mathbb{Z}^d$  on  $\Gamma$, meaning no nonzero element of  $\mathbb{Z}^d $ fixes any vertex.
    
	2.	The action of $\mathbb{Z}^d $ is invariant, i.e., for every edge  $(u, v) \in \mathcal E $ and every $a \in \mathbb{Z}^d$, we have $( u+a,  v+a) \in \mathcal E $.
    
	3.	The action is cocompact, meaning that the quotient graph  $
    \Gamma/ \mathbb{Z}^d $ is a finite graph.

 For any $\ZZ^d$-periodic graph $\Gamma$,  a finite set  $W \subset \calV(\Gamma)$ is called 
 fundamental domain if   $\bigcup_{a \in \ZZ^d} W+a = \calV(\Gamma)$ and $|W| = |\calV(\Gamma)/\ZZ^d|$.

When there is no ambiguity,  we will use $n$ in place of $|W|$, and label the vertices of $W$ by the elements of ${[n] := \{ 1,\dots, n\}}$.

A {$\ZZ^d$-periodic edge labeling} $E : \calE(\Gamma) \to \CC$ is a periodic and symmetric function:  for any $(u,v) \in \calE(\Gamma)$, $$E((u,v))=E((v,u))$$ and for all $a \in \ZZ^d$, $$E((u,v)) = E((u+a,v+a)).$$
We will often call the value $E((u,v))$ the  {label}/weight of the edge $(u,v)$. 

An edge labeling on $\Gamma$ gives rise to a  {labeled adjacency operator $A_E$}. $A_E$ acts on a function $f$ on $\calV(\Gamma)$ as follows, \[A_E(f)(u) = \sum_{v:(u,v) \in \calE(\Gamma)} E((u,v))f(v), u \in \calV(\Gamma).\] 

A {$\ZZ^d$-periodic potential} $V : \calV(\Gamma) \to \CC$ is a periodic function on $\calV(\Gamma)$ such that $V(u) = V(u+a)$ for all $u \in \calV(\Gamma)$ and $a \in \ZZ^d$. We call a pair of functions $(V,E)$ a {labeling} of $\Gamma$.

Given a labeling $(V,E)$ of $\Gamma$, the operator ${\Lc} := V + A_E$, acts on a function $f$ on $\calV(\Gamma)$ as follows:

\[\Lc(f)(u) = V(u) f(u) + \sum_{v:(u,v) \in \calE(\Gamma)} E((u,v))f(v), u \in \calV(\Gamma).\]

\subsection{Floquet Theory}

Fix a vertex fundamental domain $W$ of $\Gamma$, and let $$\TT^d := \{(u_1,\dots, u_d) \in \CC^d \mid  |u_i| = 1 \text { for each } i  \}.$$ The Floquet transform $\mathscr{F}$ of a function $f$ on $\calV(\Gamma)$ is given by: \[ f(u) \mapsto \hat{f}(z,u) = \sum_{a \in \ZZ^d} f(u+a)z^{-a}.\]

Notice that $\hat{f}(z,u+a) = z^a \hat{f}(z,u)$. One can see that $\hat{f}(z,u)$ is just the Fourier transform of $f$ restricted to the orbit $u + \ZZ^d \subseteq \calV(\Gamma)$. If $f \in \ell^2(\calV(\Gamma))$,  then by Parseval's identity,  we have
\begin{equation}\label{gmar22}
    \sum_{u \in W} \int_{\TT^d} \vert \hat{f}(z,u) \vert^2  dz=\sum_{v\in \mathcal V(\Gamma)} |f(v)|^2.
\end{equation}

Thus, we can view $\hat{f}(z,\cdot) = (\hat{f}(z,1), \hat{f}(z,2), \dots, \hat{f}(z,n))^T$ as an element of the Hilbert space $L^2(\TT^d,\CC^n)$. By \eqref{gmar22}, $\mathscr{F}$ is a unitary operator; so, $\Lc$ and $\mathscr{F} \Lc \mathscr{F}^*$ are unitarily equivalent.

Direct calculation implies that, for $u \in \calV(\Gamma)$,
\begin{equation}\label{gflo1}
    \mathscr{F} \Lc \mathscr{F}^*(\hat{f})(z,u) = V(u) \hat{f}(z,u) + \sum_{\substack{w\in W,\ a\in\ZZ^d\\ (u,w+a)\in \calE(\Gamma)}} E((u,w+a))z^a \hat{f}(z,w).
\end{equation}

For each $z \in \TT^d$, let $L(z)$ be a $|W|\times |W|$ matrix   (with rows and columns indexed by the vertices of $W$) given by the following: for $u,v \in W$,
\begin{equation}\label{gflo2}
    L(z)_{u,v} = \delta_{u,v}V(u)+\sum_{a\in\ZZ^d:(u,v+a)\in \calE(\Gamma)}E((u,v+a))z^{a},
\end{equation}
 where $\delta$ is the Kronecker delta function ($\delta_{u,v} = 1$ when $u = v$ and is $0$ otherwise). 
$L(z)$ is called the Floquet matrix.
By \eqref{gflo1} and \eqref{gflo2}, $ \mathscr{F} \Lc \mathscr{F}^*$ is the direct integral of $L(z)$:
\begin{equation}~\label{eq:decomposition} \mathscr{F} \Lc \mathscr{F}^* = \int_{\TT^d}^{\oplus} L(z) \ dz.\end{equation}

 We remark that each entry of $L(z)$ is a Laurent polynomial in $z$, and 
 \begin{equation}\label{gfeb252}
   L(z) = L(z^{-1})^T.
\end{equation}

\subsection{Dispersion polynomials}
 We call the characteristic polynomial of $L(z)$ the \emph{dispersion polynomial}, and denote it by
\begin{equation}\label{eq:dispersionPoly}
    D(z,\lambda)=\det(L(z)-\lambda I).
\end{equation}
The \emph{dispersion relation}, or \emph{Bloch variety}, is the zero set of $D(z,\lambda)$ in $(\CC^*)^d\times \CC$.

It is more common to describe the Bloch variety using quasimomentum variables $k=(k_1,\dots,k_d)$, where
\[
z_j=e^{2\pi i k_j}.
\]
In this paper we work with the variables $z$ instead. By a slight abuse of terminology, we will still refer to the zero set of $D(z,\lambda)$ as the Bloch variety.

\subsection{ Quotient Graphs}
 
\begin{Definition}
We define an equivalence relation on $\calV(\Gamma)$:
\begin{equation}\label{gapr181}
    u\sim v
\qquad\text{if and only if}\qquad
u=v+a
\quad\text{for some }a\in \ZZ^d.
\end{equation}

We write $[u]$ for the equivalence class of $u$.

Let $ {S(\Gamma/\ZZ^d)}$ be the finite simple graph with vertex set
\[
\calV(S(\Gamma/\ZZ^d)):=\calV(\Gamma)/\ZZ^d
\]
and edge set
\[
\calE(S(\Gamma/\ZZ^d))
=
\{([u],[v])\mid [u]\neq [v]\text{ and }(u,v+a)\in \calE(\Gamma)\text{ for some }a\in \ZZ^d\}.
\]
We call $S(\Gamma/\ZZ^d)$ the  quotient graph of $\Gamma$.  
\end{Definition}

\subsection{Resultant}
Given two polynomials $f(\lambda) = \sum_{i=0}^s a_i \lambda^i$ and $g(\lambda)=\sum_{i=0}^t b_i \lambda^i$ of degree $s$ and $t$ respectively, the {Sylvester matrix} is the $(s+t) \times (s+t)$ matrix
{\small\[ \begin{pmatrix}
    a_0 & a_1 & a_2 & \dots & a_{s-1} & a_s & 0 & 0 & \dots &0 \\
    0 & a_0 & a_1 & \dots & a_{s-2} & a_{s-1} & a_s & 0 & \dots & 0 \\
    \vdots & \vdots & \ddots & \ddots & \ddots& \ddots& \ddots& \ddots & \ddots & \vdots \\
    0 & 0 & 0 & \dots & a_{0} & a_{1} & a_2 & a_3 & \dots & a_s \\
    b_0 & b_1 & b_2 & \dots & b_{t-1} & b_t & 0 & 0 & \dots &0 \\
    0 & b_0 & b_1 & \dots & b_{t-2} & b_{t-1} & b_t & 0 & \dots & 0 \\
    \vdots & \vdots & \ddots & \ddots & \ddots& \ddots& \ddots& \ddots & \ddots & \vdots \\
    0 & 0 & 0 & \dots & b_{0} & b_{1} & b_2 & b_3 & \dots & b_t \\
\end{pmatrix}.\]}
The determinant of the Sylvester matrix is the (classical) {resultant}, denoted by ${\rm Res}(f,g)$.

\section{Main theorem reduction}\label{Sec:reduce}
 
In this section,  we are going to reduce Theorem \ref{Thm:main}  to   
\begin{Theorem}~\label{Thm:main2}
Assume that  $\Gamma$ has a fundamental domain $W$
which is minimally connected and there exists exactly one edge between $W$ and $\Gamma\backslash W$.

After fixing non-zero edge weights,  $D(x,\lambda)$ is irreducible for a generic choice of potential.
\end{Theorem}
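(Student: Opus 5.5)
The plan is to compute $D$ explicitly for this class of graphs, reduce it to a one-variable family in $z$, and then prove irreducibility by combining a content argument with Capelli's theorem. We read the hypothesis as saying that the induced graph on $W$ is a tree $T$ on $[n]$, and that the crossing edges form a single $\ZZ^d$-orbit, represented by $(u,v+a)$ with $a\neq 0$ and weight $e\neq0$. By \eqref{gflo2}, $L(z)$ is then the weighted tree matrix plus $\operatorname{diag}(V)$, with $e z^a$ added at entry $(u,v)$ and $e z^{-a}$ at entry $(v,u)$ (both at entry $(u,u)$ when $u=v$).

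\textbf{Step 1: structure of $D$.} Expand $\det(L(z)-\lambda I)$. The product $(ez^a)(ez^{-a})$ is independent of $z$, and every other term containing $z^{\pm a}$ corresponds to the unique cycle formed by the extra edge and the tree path $P$ from $v$ to $u$. This gives
\[
D(z,\lambda)=A(\lambda)+B(\lambda)\,(w+w^{-1}),\qquad w=z^a,
\]
where $A$ has degree $n$ with leading coefficient $\pm1$. The polynomial $B$ equals $\pm e$ times the product of the weights along $P$ times $\det(M-\lambda I)$, where $M$ is the Floquet matrix restricted to the vertices off $P$. Hence $B\not\equiv0$ and $\deg B=n-|P|<n$. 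Next write $a=ma'$ with $a'$ primitive and $m\ge1$, and apply a monomial change of variables in $GL_d(\ZZ)$ sending $z^{a'}\mapsto z_1$. This is an automorphism of $\CC[z^\pm,\lambda]$, so it suffices to show that $F(t,\lambda)=A+B(t^m+t^{-m})$ is irreducible in $\CC[t^\pm,\lambda]$; adjoining the other variables creates no new factorizations. The ring $\CC[\text{potential}][z^\pm,\lambda]$ fits the framework of Theorem~\ref{dich}, with $\xi$ the potential and the weights fixed. So by Corollary~\ref{cor:one_point} it is enough to exhibit one potential for which $F$ is irreducible.

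\textbf{Step 2: irreducibility criterion.} Let $K=\CC(\lambda)$ and $s=-A/B\in K$. Then $t^mF=B\,(t^{2m}-s\,t^m+1)$. Assume the following three conditions:
\begin{enumerate}
\item $\gcd(A,B)=1$;
\item $A+2B$ has a simple root;
\item $B$ has a simple root $\lambda_0$ with $A(\lambda_0)\neq0$.
\end{enumerate}
Under (1), $t^mF$ is primitive over $\CC[\lambda]$, and its constant term in $t$ is $B\neq0$, so no power of $t$ divides it. By Gauss's lemma, irreducibility in $\CC[t^\pm,\lambda]$ then reduces to irreducibility of $t^{2m}-st^m+1$ over $K$.

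\textbf{Step 3: Capelli's theorem.} By (2), $s-2$ has a simple zero, while $s+2$ does not vanish there. Hence $s^2-4$ is not a square in $K$, so $y^2-sy+1$ is irreducible and $L=K(y)$ is a quadratic extension. By Capelli's theorem it remains to show that $y$ is not a $p$-th power in $L$ for any prime $p\mid m$, and that $y\notin -4L^4$ when $4\mid m$. By (3), $s$ has a simple pole at $\lambda_0$. Since $s^2-4\sim s^2$ there, the place $\lambda_0$ splits in $L$. At one place above it $y\sim s$ has a simple pole (and $1/y$ vanishes at the other). A valuation equal to $-1$ is neither divisible by $p$ nor by $4$, so both conditions hold, and $F$ is irreducible.

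\textbf{Step 4: realizing (1)--(3).} The main obstacle is producing one potential that satisfies (1)--(3) with the weights held fixed. I would first try a scaling limit: take $V=R\cdot(\text{distinct generic values})$ with $R\to\infty$. Then $A(\lambda)\approx\pm\prod_{j}(V_j-\lambda)$ and $B(\lambda)\approx c\prod_{j\notin P}(V_j-\lambda)$, with $O(1)$ corrections coming from the tree weights. The roots of $B$ lie near the $V_j$ with $j\notin P$; these are simple, and $A$ is of size $R$ there while $B$ is $O(1)$, which gives (3) and (1) away from those points. The roots of $A+2B$ lie near all the $V_j$, so they are simple, giving (2). The degenerate cases need separate handling. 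When $P$ contains every vertex, $B$ is a nonzero constant: then (3) is replaced by the pole of $s$ at $\lambda=\infty$, which has order $n$, and one must check that the relevant valuation is coprime to $m$. If that fails, choosing a different place (for example $\lambda=\infty$ versus $\lambda_0$) should give a valuation coprime to $p$. Pinning down these cases is the step I expect to require the most care.
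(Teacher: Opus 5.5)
Your Steps 1--3 are correct. When $B$ has a simple root $\lambda_0$ with $A(\lambda_0)\neq0$, your Gauss--Capelli criterion is a neat substitute for Lemma~\ref{lemliu1}. A single place with $v(y)=-1$ excludes all of Case~2 at once, and condition (2) is exactly the paper's argument for Case~3 (Theorem~\ref{thmonecase3}).

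\textbf{The degenerate case is a genuine gap.} This is the case where the cycle through the crossing edge covers all of $W$, so $B=\kappa$ is a nonzero constant, together with $\gcd(n,m)>1$. Your remedy of ``choosing a different place'' cannot work there. Since $s=-A/\kappa\in\CC[\lambda]$, both $y$ and $y^{-1}=s-y$ are integral over $\CC[\lambda]$. So $y$ is a unit at every finite place, and $\infty$ splits with $v(y)=\mp n$ at the two places above it. Hence $\operatorname{div}(y)$ is divisible by $n$, and no valuation can show $y\notin L^p$ for a prime $p\mid\gcd(n,m)$.

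This is not a technicality. Take $n=a=2$, $W=\{1,2\}$, tree edge $(1,2)$ of weight $e_{12}$, and crossing edge $(2,1+2)$ of weight $e$. Then $D=A+B(x^2+x^{-2})$ with $A=(v_1-\lambda)(v_2-\lambda)-e_{12}^2-e^2$ and $B=-e_{12}e$. Whenever $A-2B=r_1(\lambda)^2$, which happens exactly when $(v_1-v_2)^2+4(e_{12}-e)^2=0$, one has $D=r_1^2+B(x+x^{-1})^2$. This factors, although (1) and (2) both hold there.

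So in this case you need an input that uses the potential in a non-valuative way to show $y$ is not a $p$-th power. One can show that $y=\beta^p$ forces $\beta\beta'=c$ with $c^p=1$, so $s=\beta^p+(c/\beta)^p$ is a Dickson polynomial in $\sigma=\beta+\beta'\in\CC[\lambda]$. This is the situation of Case~2 of Lemma~\ref{lemliu1} with constant $s_1,t_1$. The paper excludes it in Subcase~2 of Theorem~\ref{thmonecase2} by comparing top homogeneous parts in $v$. Such a factorization would make the top part of the $\lambda^0$-coefficient an $l$-th power $f(v)^l$ with $l\ge2$, whereas it is $\prod_j v_j$.

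\textbf{The verification of (1) is also not valid as written}, and the same applies to $A(\lambda_0)\neq0$ in (3). Since $A(\lambda)\approx\pm\prod_i(V_i-\lambda)$, $A$ has a root near every $V_j$, including $j\notin P$. So $A$ is not ``of size $R$'' at the roots of $B$. The roots of $A$ and of $B$ near $V_j$ agree to within $O(1/R)$, and the scaling heuristic cannot separate them. Condition (1) is precisely the absence of flat bands, which the paper obtains from Theorem~\ref{Thm:8.1} via the reduction Lemma~\ref{Lem:new71}.

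You can also prove (1) by hand. Let $Q_1$ be a component of $W\setminus P$, joined to $P$ by the tree edge $(j_1,q_1)$ of weight $\tau$. Then $D_{Q_1}\mid B$, and at each root $\lambda_0$ of $D_{Q_1}$,
\[
A(\lambda_0)=-\tau^2\,D_{W\setminus(Q_1\cup\{j_1\})}(\lambda_0)\,D_{Q_1\setminus\{q_1\}}(\lambda_0).
\]
This reduces (1) to two facts: generic disjointness of spectra on disjoint vertex sets, and $\gcd\bigl(D_{Q_1},D_{Q_1\setminus\{q_1\}}\bigr)=1$ for a tree with generic potential, which follows by induction on the tree. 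Some argument of this kind has to replace the size heuristic.
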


\begin{Lemma}\label{keyliu1}
The quotient graph of $\Gamma$ is connected if and only if there exists a fundamental domain $W$ such that $W$ is connected (as   vertices in $\Gamma$).
\end{Lemma}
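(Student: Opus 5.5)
Both directions go through the induced subgraph of $\Gamma$ on a fundamental domain. The easy direction is ($\Leftarrow$). Suppose $W$ is a fundamental domain whose induced subgraph in $\Gamma$ is connected. Take two classes $[u],[v]$ of the quotient graph, and choose their representatives $u',v'\in W$ (possible since $W$ contains exactly one vertex of each orbit). A path from $u'$ to $v'$ inside $W$ passes through vertices $w_0=u',w_1,\dots,w_m=v'$ with $(w_i,w_{i+1})\in\calE(\Gamma)$. Projecting gives a walk $[w_0],[w_1],\dots,[w_m]$ in which consecutive classes are distinct (the $w_i$ lie in distinct orbits) and adjacent in $S(\Gamma/\ZZ^d)$ by definition. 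Hence the quotient graph is connected.

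For ($\Rightarrow$), assume the quotient graph is connected, with $n$ vertices. I will build $W$ greedily, adding one orbit at a time while keeping connectivity.

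Start with an arbitrary vertex $u_1$ and set $W_1=\{u_1\}$. Suppose $W_j=\{u_1,\dots,u_j\}$ is connected in $\Gamma$ and meets $j$ distinct orbits, with $j<n$. Because the quotient graph is connected, some class $[v]\notin\{[u_1],\dots,[u_j]\}$ is adjacent to some $[u_i]$. By definition there are $u\in[u_i]$ and $a$ with $(u,v+a)\in\calE(\Gamma)$. Write $u=u_i+b$. By periodicity of the edge set, translating by $-b$ gives $(u_i,v+a-b)\in\calE(\Gamma)$.

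Set $u_{j+1}=v+a-b$. Then $W_{j+1}=W_j\cup\{u_{j+1}\}$ is connected and meets $j+1$ distinct orbits. After $n-1$ steps, $W=W_n$ contains one representative from each orbit. So $\bigcup_a (W+a)=\calV(\Gamma)$ and $|W|=n$, and $W$ is a connected fundamental domain.

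The only genuinely needed input is the translation step: it uses invariance of $\calE(\Gamma)$ under $\ZZ^d$ to move the witnessing edge so that it touches the already chosen representative $u_i$. That step is where one must be careful. The rest is bookkeeping.

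It is also natural to note, for later use, a refinement of the construction. Taking $W$ connected and choosing a spanning tree of its induced subgraph gives a minimally connected structure. This is presumably what the reduction to Theorem~\ref{Thm:main2} will exploit.
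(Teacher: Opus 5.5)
Your proposal is correct and follows essentially the same route as the paper. For the nontrivial direction you build the connected fundamental domain by adding one orbit representative at a time, using $\ZZ^d$-invariance of $\calE(\Gamma)$ to translate the witnessing edge onto an already chosen vertex (the paper's choice $x_{m+1}=y_{m+1}+b-a$). Your path-projection argument simply spells out the direction the paper calls clear.
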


\begin{proof}
It is clear that if there exists a fundamental domain $W$ such that $W$ is connected, then the quotient graph of $\Gamma$ is connected.

Now assume that the quotient graph of $\Gamma$ is connected. We are going to  construct a connected fundamental domain by induction.

Choose one equivalence class (see \eqref{gapr181}) and pick a representative $x_1$. Suppose we have already chosen representatives
\[
x_1,\dots,x_m
\]
from equivalence classes such that these vertices are connected in $\Gamma$.

Since the quotient graph is connected, if not all equivalence classes have been chosen, then there exists another equivalence class $[y_{m+1}]$ which is connected  to one of 
\[
[x_1],\dots,[x_m].
\]
Without loss of generality, assume that $[y_{m+1}]$ is connected  to $[x_1]$. Then there exist $a,b\in\ZZ^d$ such that
\[
x_1+a \quad \text{and} \quad y_{m+1}+b
\]
are connected in $\Gamma$. Let
\[
x_{m+1}:=y_{m+1}+b-a.
\]
Then $x_{m+1}$ is connected to $x_1$. Hence the vertices
\[
x_1,\dots,x_m,x_{m+1}
\]
are  connected in $\Gamma$.

Continuing this process, we obtain one representative from each equivalence class, and these representatives form a connected fundamental domain.
\end{proof}

\begin{proof}[\bf Proof of Theorem~\ref{Thm:main} based on Theorem~\ref{Thm:main2}]
Assume first that the quotient graph of $\Gamma$ is connected. By Lemma~\ref{keyliu1}, there exists a connected fundamental domain $W$. Since $\Gamma$ is nontrivial, there exists at least one edge between $W$ and $\Gamma\backslash W$. By specializing enough edge weights (with respect to $\ZZ^d$) to zero, we may assume that $W$ is minimally connected and that there is exactly one edge between $W$ and $\Gamma\backslash W$. Denote this edge by
\[
(u,v+a),
\qquad u,v\in W,\ a=(a_1,\dots,a_d)\in \ZZ^d.
\]
Without loss of generality, assume that $a_1\neq 0$. Set
\[
x=z_1,\qquad z_2=\cdots=z_d=1.
\]
After these specializations, we obtain a new graph $\Gamma'$ ($\ZZ$-periodic) and a new dispersion polynomial $D'(x,\lambda)$. We notice that $\Gamma'$ and $D'(x,\lambda)$ are obtained from $\Gamma$ and $D(z,\lambda)$ by specializing some edge weights to zero and setting $z_2=\cdots=z_d=1$.

Applying Theorem~\ref{Thm:main2} to $\Gamma'$, we have that $D'(x,\lambda)$ is irreducible for a generic choice of potential. In particular, there exists one choice of potential for which $D'(x,\lambda)$ is irreducible. For the corresponding choice of parameters, we claim that $D(z,\lambda)$ is irreducible. Otherwise, assume $D(z,\lambda)$ is  reducible, then its factorization would lead to a factorization of $D'(x,\lambda)$, which is  a contradiction. Therefore, by Corollary~\ref{cor:one_point}, $D(z,\lambda)$ is irreducible generically.

On the other hand, assume that $D(z,\lambda)$ is irreducible generically. Then the quotient graph of $\Gamma$ must be connected; otherwise the Floquet matrix would contain at least two blocks, and hence $D(z,\lambda)$ would be reducible. Also, $\Gamma$ must be nontrivial. Indeed, if $\Gamma$ is trivial, then for a suitable fundamental domain $W$,   $L(z)$  is  independent of $z$. Therefore, $D(z,\lambda)$ is reducible (recall our assumption $|W|\geq 2$). So $\Gamma$ is nontrivial.
\end{proof}
    The irreducibility of the Bloch variety, namely the zero set of the dispersion polynomial, is slightly different from the irreducibility of the dispersion polynomial itself because of the multiplicities. We first give two   lemmas before proving Corollary~\ref{maincor}.

    \begin{Lemma}\label{lemsimple}
Let $B\in M_n(\CC)$ be fixed. For $v=(v_1,\dots,v_n)\in \CC^n$, define
\[
M(v)=\operatorname{diag}(v_1,\dots,v_n)+B.
\]
Then the set
\[
\calU_{\mathrm{simp}}(B)
=
\{v\in \CC^n : M(v)  \text{ has } n \text{ simple eigenvalues} \}
\]
is a nonempty Zariski-open subset of $\CC^n$.
\end{Lemma}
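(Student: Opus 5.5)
Zariski openness comes from the discriminant, and nonemptiness from a scaling argument. Let $\chi_v(\lambda)=\det(\lambda I-M(v))$; its coefficients are polynomials in $v$ (with $B$ fixed). The matrix $M(v)$ has $n$ simple eigenvalues exactly when $\chi_v$ has no repeated root, i.e. when the discriminant
\[
\Delta(v):=\operatorname{Res}\bigl(\chi_v,\chi_v'\bigr)
\]
is nonzero. Here $\chi_v$ is monic of degree $n$, so the Sylvester matrix has fixed size. Since the entries of the Sylvester matrix are polynomial in the coefficients of $\chi_v$, the function $\Delta$ is a polynomial in $v$. Hence
\[
\calU_{\mathrm{simp}}(B)=\{v:\Delta(v)\neq 0\}
\]
is Zariski open, and it remains to show $\Delta\not\equiv 0$, i.e. to exhibit one $v$ with simple spectrum.

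For nonemptiness I take $v=t\,w$ with $w=(1,2,\dots,n)$ and $t\in\CC$ large. Write
\[
M(tw)=t\bigl(\operatorname{diag}(w)+t^{-1}B\bigr).
\]
The matrix $\operatorname{diag}(w)$ has $n$ distinct eigenvalues, so its characteristic polynomial has nonzero discriminant. The discriminant of $\operatorname{diag}(w)+sB$ is a polynomial in $s$ that is nonzero at $s=0$. It is therefore nonzero for all but finitely many $s$, and in particular for some $s=t^{-1}\neq 0$. Scaling by $t\neq 0$ preserves simplicity of eigenvalues, so $tw\in \calU_{\mathrm{simp}}(B)$.

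Equivalently, one can note directly that $\Delta(tw)$ is a polynomial in $t$ whose top-degree coefficient is the discriminant of $\operatorname{diag}(w)$, which is nonzero. The only step needing care is confirming that the discriminant is genuinely polynomial in the parameters. This holds because the characteristic polynomial is monic, so the leading coefficient of $\chi_v$ never vanishes and no degree drop occurs. Beyond that there is no real obstacle; this lemma is routine.
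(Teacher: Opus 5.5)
Your proof is correct. The openness half matches the paper's, which also uses $\operatorname{Res}(f_v,f_v')$ with $f_v(\lambda)=\det(M(v)-\lambda I)$. The two arguments differ only in how they produce a point with simple spectrum.

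The paper takes $v_1,\dots,v_n$ pairwise far apart, so that the Gershgorin discs of $M(v)$ are pairwise disjoint. These discs have centres $v_i+B_{ii}$ and radii $R_i=\sum_{k\neq i}|B_{ik}|$. The paper then relies on the counting form of Gershgorin's theorem: a disc isolated from the others contains exactly one eigenvalue.

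You instead restrict to the ray $v=tw$. You note that the discriminant of $\operatorname{diag}(w)+sB$ is a polynomial in $s$ that does not vanish at $s=0$, and then rescale by $t=s^{-1}$.

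Your route is purely algebraic. It sidesteps the eigenvalue-counting refinement of Gershgorin, which itself rests on a continuity argument. Your remark on the top-degree coefficient of $\Delta(tw)$ also shows $\Delta(tw)\neq 0$ for all sufficiently large $|t|$, not just for some $t$.

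The paper's route is shorter once Gershgorin is granted. It also gives an explicit Euclidean-open region of good potentials rather than points along a single ray: any $v$ with $|v_i+B_{ii}-v_j-B_{jj}|>R_i+R_j$ for all $i\neq j$. Since only $\Delta\not\equiv 0$ is needed, either argument suffices.
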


\begin{proof}

Let $f_v(\lambda)=\det (M(v)-\lambda I)$. ${\rm Res} (f_v(\lambda), f_v'(\lambda))$ is a polynomial of $v$ and ${\rm Res} (f_v(\lambda), f_v'(\lambda))\neq 0$ iff $M(v)$    has  $n$   simple eigenvalues.

 By choosing the potentials
$v_1,\dots,v_n$ pairwise sufficiently far apart, the Gershgorin discs of $M(v)$ become pairwise disjoint. It implies that  $M(v)$ has $n$ simple eigenvalues for such a choice of potential. Therefore the ${\rm Res} (f_v(\lambda), f_v'(\lambda))$
is not identically zero as a polynomial in $(v_1,\dots,v_n)$, and so the set
$\calU_{\mathrm{simp}}(B)$
is a nonempty Zariski-open subset of $\CC^n$.
\end{proof}
\begin{Lemma}\label{lem:BV-to-poly}
Let
\[
f(z,\lambda)\in \CC[z_1^{\pm1},\dots,z_d^{\pm1},\lambda]
\]
be monic in $\lambda$, and let
\[
\mathcal B_f=\{(z,\lambda)\in (\CC^*)^d\times \CC : f(z,\lambda)=0\}.
\]
Assume that $\mathcal B_f$ is irreducible. If there exists $z^0\in (\CC^*)^d$ such that
$f(z^0,\lambda)$ has only simple roots in $\lambda$, then $f$ is irreducible in
$\CC[z_1^{\pm1},\dots,z_d^{\pm1},\lambda].$
 
\end{Lemma}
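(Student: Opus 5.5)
We argue by contradiction. Suppose $f$ is reducible in $\CC[z^{\pm1},\lambda]$. Since $f$ is monic in $\lambda$, Lemma~\ref{lem:monic_normalization} (whose argument uses only monicity) gives a factorization $f=hg$ with $h,g$ monic in $\lambda$ of positive $\lambda$-degrees $r$ and $n-r$. In particular neither factor is a unit, and for every $z\in(\CC^*)^d$ both $h(z,\cdot)$ and $g(z,\cdot)$ are nonconstant polynomials in $\lambda$. Writing $\mathcal B_h$ and $\mathcal B_g$ for their zero sets in $(\CC^*)^d\times\CC$, we have
\[
\mathcal B_f=\mathcal B_h\cup\mathcal B_g .
\]
Both sets are closed, and both are nonempty: over any $z$, each of $h(z,\cdot)$ and $g(z,\cdot)$ has a root in $\lambda$.

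Since $\mathcal B_f$ is irreducible, one of the two pieces is all of $\mathcal B_f$; say $\mathcal B_f=\mathcal B_h$, so $\mathcal B_g\subset\mathcal B_h$. Now specialize to $z=z^0$. The polynomial $g(z^0,\lambda)$ is monic of degree $n-r\ge1$, so it has a root $\lambda_0$. Then $(z^0,\lambda_0)\in\mathcal B_g\subset\mathcal B_h$, hence $h(z^0,\lambda_0)=0$ as well. It follows that $\lambda_0$ is a root of $f(z^0,\lambda)=h(z^0,\lambda)g(z^0,\lambda)$ of multiplicity at least $2$, contradicting the assumption that $f(z^0,\lambda)$ has only simple roots.

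The case $\mathcal B_f=\mathcal B_g$ is symmetric. Hence $f$ is irreducible.

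The only point needing care is that the two pieces of the decomposition are genuinely nonempty. A nonmonic factor could be a unit such as a monomial in $z$, and then its zero set would be empty in the torus. Monicity of both factors in $\lambda$ with positive degree rules this out, which is why we normalize the factorization first. No Nullstellensatz-type argument is needed, because the pointwise specialization at $z^0$ already produces the double root directly.
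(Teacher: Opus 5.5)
Your proof is correct. It differs from the paper's in how the irreducibility of $\mathcal B_f$ is used.

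The paper asserts that irreducibility of the zero set forces $f=h^m$ with $h$ irreducible and $m\ge 2$. It then notes that $f(z^0,\lambda)=h(z^0,\lambda)^m$ has repeated roots. The step ``$\mathcal B_f$ irreducible $\Rightarrow$ $f$ is a power of a single irreducible'' rests, unstated, on the fact that non-associate irreducible factors cut out distinct zero sets in $(\CC^*)^d\times\CC$. That is a Nullstellensatz-type correspondence in $\CC[z^{\pm1},\lambda]$.

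You instead take an arbitrary monic splitting $f=hg$. You use irreducibility only in its topological form: $\mathcal B_f=\mathcal B_h\cup\mathcal B_g$ forces one piece to be all of $\mathcal B_f$. You then get a common root of $h(z^0,\cdot)$ and $g(z^0,\cdot)$ directly. The only algebraic input is that a monic polynomial of positive degree has a root.

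The paper's route gives the extra structural fact that $f$ is a pure power of one irreducible whenever $\mathcal B_f$ is irreducible, which is the viewpoint behind Theorem~\ref{thm:components-factors}. Yours is shorter and fully self-contained.

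A minor point: nonemptiness of $\mathcal B_h$ and $\mathcal B_g$ is not needed for the decomposition step. What you actually use is that the fibre of the complementary factor over $z^0$ is nonempty, and monicity gives that.
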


\begin{proof}
We are going to prove it by contradiction.
Assume $f$ is reducible in $\CC[z_1^{\pm1},\dots,z_d^{\pm1},\lambda]$.
Since the zero set of $f$ is irreducible, we must have that 
\[
f=h^m
\]
for some irreducible polynomial $h$ that is monic in $\lambda$ and $m\geq 2$.

Now consider $z=z^0$. We  have that 
\[
f(z^0,\lambda)=h(z^0,\lambda)^m
\]
has repeated roots. This contradicts the assumption that $f(z^0,\lambda)$ has only simple roots. 
\end{proof}
\begin{proof}[\bf Proof of Corollary \ref{maincor}]
Assume first that the quotient graph $\Gamma$ is connected and that $\Gamma$ is nontrivial. By Theorem~\ref{Thm:main}, the dispersion polynomial
$D(z,\lambda)$
is generically irreducible  and hence its zero set
\[
\{(z,\lambda)\in (\CC^*)^d\times \CC : D(z,\lambda)=0\}
\]
is irreducible. Thus the Bloch variety is generically irreducible.

Conversely, assume that the Bloch variety is generically irreducible. By Lemma \ref{lemsimple}, 
generically $D(1,\lambda)$ has only simple roots.
 Applying Lemma~\ref{lem:BV-to-poly} to
$
f(z,\lambda)=D(z,\lambda),
$
we conclude that generically $D(z,\lambda)$ is irreducible. Theorem~\ref{Thm:main} now implies that the quotient graph $\Gamma$ is connected and $\Gamma$ is nontrivial.
\end{proof}
By Lemmas~\ref{lemsimple} and~\ref{lem:BV-to-poly}, and by a slight modification of the proof of Corollary~\ref{maincor}, we obtain the following refinement:

\begin{Theorem}\label{thm:components-factors}
Fix the edge weights for a $\ZZ^d$ periodic graph. For a generic choice of potentials, every irreducible factor of the dispersion polynomial $D(z,\lambda)$ in
\[
\CC[z_1^{\pm1},\dots,z_d^{\pm1},\lambda]
\]
has multiplicity one.

More precisely, let
\[
\widetilde D(z,\lambda)=(-1)^nD(z,\lambda),
\]
so that $\widetilde D$ is monic in $\lambda$. For a generic choice of potentials, if
\[
\widetilde D(z,\lambda)=D_1(z,\lambda)\cdots D_m(z,\lambda)
\]
is the factorization of $\widetilde D$ into irreducible factors, with each $D_j$ chosen monic in $\lambda$, then the factors $D_1,\dots,D_m$ are pairwise distinct.
\end{Theorem}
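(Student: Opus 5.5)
Fix the edge weights, and write the potential as $v=(v_1,\dots,v_n)\in\CC^n$. Then $L(z)=\operatorname{diag}(v)+B(z)$, where $B(z)$ is the Floquet matrix with zero potential. The argument will show that for generic $v$ some fiber $z=z^0$ of $\widetilde D$ has only simple roots in $\lambda$, and then transfer this to the factorization.

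\textbf{Step 1 (a fiber with simple roots).} Take $z^0=(1,\dots,1)\in(\CC^*)^d$ and set $B:=B(z^0)\in M_n(\CC)$. This is a fixed matrix once the edge weights are fixed. By Lemma~\ref{lemsimple}, the set $\calU_{\mathrm{simp}}(B)$ of $v$ for which $\operatorname{diag}(v)+B=L(z^0)$ has $n$ simple eigenvalues is a nonempty Zariski-open subset of $\CC^n$. Equivalently, for such $v$ the polynomial $\widetilde D(z^0,\lambda)$ has only simple roots. Note that $\mathrm{Res}(\widetilde D(z^0,\cdot),\partial_\lambda\widetilde D(z^0,\cdot))$ is a polynomial in $v$ which is not identically zero.

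\textbf{Step 2 (no repeated factor).} Fix $v\in\calU_{\mathrm{simp}}(B)$. Factor $\widetilde D=D_1\cdots D_m$ into irreducible factors in $\CC[z^{\pm},\lambda]$. Since $\widetilde D$ is monic in $\lambda$, each factor has leading $\lambda$-coefficient a unit $cz^\alpha$ of $\CC[z^\pm]$. Indeed, a product of Laurent polynomials in $z$ is a monomial only if each factor is a monomial, which can be checked on Newton polytopes. So we may normalize each $D_j$ to be monic in $\lambda$, exactly as in Lemma~\ref{lem:monic_normalization}. Units of $\CC[z^{\pm},\lambda]$ are $cz^\alpha$, so two associate monic factors are equal. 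Suppose $D_i=D_j$ for some $i\ne j$. Then $D_i^2$ divides $\widetilde D$, hence $D_i(z^0,\lambda)^2$ divides $\widetilde D(z^0,\lambda)$. Since $D_i$ is monic in $\lambda$, a factor that is irreducible but not constant in $\lambda$ has $\deg_\lambda D_i\ge1$. The only irreducible factors of $\lambda$-degree $0$ are units, which are excluded. So $D_i(z^0,\lambda)$ is a monic polynomial of positive degree, and its square dividing $\widetilde D(z^0,\lambda)$ gives a repeated root, contradicting Step 1. This is the argument of Lemma~\ref{lem:BV-to-poly}, now applied factor by factor without assuming the zero set is irreducible.

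\textbf{Main obstacle.} The step needing the most care is the normalization in Step 2: checking that irreducible factors of a $\lambda$-monic Laurent polynomial can be taken monic in $\lambda$ and have positive $\lambda$-degree. Once that is settled, the rest is immediate from Lemma~\ref{lemsimple}. It then follows that the factors are pairwise distinct and each has multiplicity one, for all $v$ in the nonempty Zariski-open set $\calU_{\mathrm{simp}}(B(1,\dots,1))$.
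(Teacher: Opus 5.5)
Your proposal is correct and follows essentially the paper's own route. The paper likewise applies Lemma~\ref{lemsimple} at $z^0=(1,\dots,1)$ to get simple roots for generic potentials, then uses the specialization argument of Lemma~\ref{lem:BV-to-poly} to show that a repeated monic irreducible factor would force a repeated root at $z^0$; your monic-normalization details fill in exactly what the paper's ``slight modification'' leaves implicit.
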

\section{Technical preparations}~\label{Sec:tech}

Throughout this section we work in the setting of Theorem~\ref{Thm:main2}. Thus $\Gamma$ is a $\ZZ$-periodic graph with a minimally connected fundamental domain $W=[n]$, and there is exactly one edge between $W$ and $\Gamma\backslash W$. Fix the edge (nonzero) weights, write $L(x)$ for the Floquet matrix, and set
\[
D(x,\lambda)=\det(L(x)-\lambda I).
\]
Because there is exactly one edge between $W$ and $\Gamma\backslash W$, there exists a single pair of vertices $u,v\in W$ and exactly one nonzero integer $a$ such that $u$ and $v+a$ are connected. Therefore, the only $x$-dependent terms in $L(x)$ are the monomials $x^a$ and $x^{-a}$ coming from the edge $(u,v+a)$ and its reverse.

In order to prove Theorem~\ref{Thm:main2}, we analyze the determinant expansion of $D(x,\lambda)$.

For a Laurent polynomial,
\[
f(\xi)=\sum_{b\in\ZZ^k} c_b \xi^b,
\]
we write
\[
[\xi^b]f:=c_b
\]
for the coefficient of the monomial $\xi^b$ in $f$.

Let ${S_n}$ denote the symmetric group on $[n]$. For $\sigma\in S_n$, define
\begin{equation}\label{eq:perm}
{\sigma D(x,\lambda)}=\prod_{i=1}^n (L(x)-\lambda I)_{i,\sigma(i)}.
\end{equation}
Similarly, if $\eta=(j_1\dots j_p)$ is a cycle, then we write
\begin{equation}\label{eq:cycle}
{\eta D(x,\lambda)}=\prod_{i=1}^p (L(x)-\lambda I)_{j_i,\eta(j_i)}.
\end{equation}
If $\sigma=\eta_1\cdots \eta_m$ is the decomposition of $\sigma$ into disjoint cycles, then
\[
\sigma D(x,\lambda)=\prod_{r=1}^m \eta_r D(x,\lambda).
\]

By the definition of determinant,
\begin{equation}\label{defdet}
D(x,\lambda)=\sum_{\sigma\in S_n} \operatorname{sgn}(\sigma)\,\sigma D(x,\lambda).
\end{equation}

\begin{Lemma}\label{lem1}
There is a unique cycle $\eta$ such that $[x^a]\eta D(x,\lambda)$ is nontrivial.
\end{Lemma}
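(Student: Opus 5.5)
The plan is to read cycles of the permutation expansion \eqref{defdet} as oriented closed walks in the quotient graph and then use that $W$ is a tree with exactly one extra edge. For a cycle $\eta=(j_1\dots j_p)$, the product $\eta D(x,\lambda)$ is nonzero only if every entry $(L(x)-\lambda I)_{j_i,\eta(j_i)}$ is nonzero. For $p\ge 2$ these are off-diagonal entries, and by \eqref{gflo2} such an entry is nonzero only if $j_i$ and $\eta(j_i)$ are joined either by an edge of $W$ or by the unique crossing edge $(u,v+a)$ (or its reverse $(v,u-a)$). The first step is therefore to record the off-diagonal entries of $L(x)$ explicitly:
\begin{itemize}
\item if $\{i,j\}$ is a tree edge of $W$ other than $\{u,v\}$, then $L_{ij}=E((i,j))$ is a nonzero constant;
\item if $u\ne v$, then $L_{uv}=c+E((u,v+a))x^{a}$ and $L_{vu}=c+E((u,v+a))x^{-a}$, where $c$ is the weight of the edge $\{u,v\}$ if it lies in $W$ and $c=0$ otherwise;
\item every other off-diagonal entry is $0$.
\end{itemize}
If $u=v$, the only $x$-dependence is on the diagonal: $L_{uu}=V(u)+E((u,u+a))(x^{a}+x^{-a})$.

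Next I identify the candidate cycle. If $u=v$, I take $\eta=(u)$. Its factor $L_{uu}-\lambda$ has $[x^a]$-coefficient $E((u,u+a))\neq 0$. Every other cycle either avoids $u$, so its factors are $x$-free, or is a longer cycle through $u$ using only constant off-diagonal entries, so again it is $x$-free. Hence $(u)$ is the unique such cycle.

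If $u\ne v$, a cycle $\eta$ with $p\ge 3$ and $\eta D\neq0$ traverses distinct vertices along edges of $T\cup\{uv\}$, where $T$ is the spanning tree $W$. A graph consisting of a tree plus one edge has exactly one simple cycle $C$. So $\eta$ must be one of the two orientations of $C$, namely the path in $T$ from $v$ back to $u$ followed by the edge $uv$.
\begin{itemize}
\item The orientation using $u\to v$ picks up $L_{uv}$. Since $p\ge3$, the tree path does not contain the edge $\{u,v\}$, so $c=0$ here and the product is $E((u,v+a))x^{a}$ times nonzero tree weights. Its $[x^a]$-coefficient is nonzero.
\item The opposite orientation picks up $L_{vu}$ and hence only $x^{-a}$.
\item Every cycle not using the edge $uv$ stays inside $T$; since $T$ has no cycles of length $\ge 3$, such a cycle is impossible.
\end{itemize}

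For $p=2$, the only relevant candidate is $(u\,v)$, with product $L_{uv}L_{vu}=c^2+cE(x^{a}+x^{-a})+E^2$. Its $[x^a]$-coefficient is $cE$, which is nonzero exactly when $\{u,v\}$ is a tree edge, i.e. exactly when $C$ has length $2$. Thus in every case precisely one cycle contributes an $x^a$ term, namely the one traversing $C$ in the direction $u\to v$. Diagonal $1$-cycles $(w)$ with $w\neq u$ are $x$-free.

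The main obstacle is bookkeeping rather than depth. One must handle uniformly the degenerate cases $u=v$ (a loop in the quotient) and $C$ of length $2$, where both orientations collapse to the same transposition. One must also make sure that in length $\ge3$ the edge $\{u,v\}$ cannot simultaneously be a tree edge, which follows from the simplicity of $C$.
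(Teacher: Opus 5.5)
Your proof is correct and takes essentially the same approach as the paper: the unique simple path in the tree $W$ from $v$ to $u$, closed up by the crossing edge and oriented so that it uses the entry $L_{uv}$ carrying $x^a$. You also spell out what the paper leaves implicit, namely the degenerate cases $u=v$ and $C$ of length two, and the check that every other cycle contributes no $x^a$ term.
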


\begin{proof}
Let the unique edge between $W$ and $\Gamma\backslash W$ be $(u,v+a)$, where $u,v\in W$ and $a\neq 0$. Since $W$ is minimally connected, there is a unique simple path in $W$ from $v$ to $u$. Together with the edge $(u,v+a)$, this path determines a unique cycle $\eta$ in the quotient graph such that $[x^a]\eta D(x,\lambda)\neq 0$.

\end{proof}
Denote by $p$ the size of this cycle $\eta$. Without loss of generality, assume that
\begin{equation}\label{g12}
\eta=(1\cdots p).
\end{equation}
Let
\[
U=\{p+1,p+2,\dots,n\}.
\]
For $1\le j\le p-1$, write $e_{j,j+1}$ for the weight of the  edge $(j,j+1)$, and write $e_{(p,1),a}$ for the weight of the unique   edge $(p,1+a)$. Without loss of generality, assume $a>0$.

When $p=n$, $U$ is $\emptyset$. In this case, we define $\det (L|_{U}-\lambda I_{n-p})=1$.

\begin{Proposition}\label{prop}
	The following holds.
	\begin{enumerate}
		\item[A.]
		\begin{equation}
[x^a] D(x,\lambda)= \det (L|_{U}-\lambda I_{n-p})\,(-1)^{p-1} e_{(p,1),a}\prod_{j=1}^{p-1} e_{j, j+1},
		\end{equation}
		where $I_{n-p}$ is the identity matrix of size $n-p$.
		\item[B.]
	Let
\[
s(\lambda)=\det (L|_{U}-\lambda I_{n-p})\,(-1)^{p-1} e_{(p,1),a}\prod_{j=1}^{p-1} e_{j, j+1}.
\]
	Then there exists a polynomial $ r(\lambda) \in \CC[\lambda]$ such that
	\begin{equation}\label{keyDet}
D(x,\lambda)=r(\lambda) +s(\lambda) x^a+s(\lambda) x^{-a}.
	\end{equation}
		\item[C.]
	Polynomials $ r(\lambda)\in \CC[\lambda]$ and $s(\lambda) \in\CC[\lambda]$ satisfy
		\begin{itemize}
			\item $\deg r(\lambda)=n$ and $[\lambda^n] r(\lambda)=(-1)^n$;
			\item $\deg s(\lambda)=n-p$ and
\[
[x^a\lambda^{n-p}] D(x,\lambda)= [\lambda^{n-p}] s(\lambda)=(-1)^{n-1} e_{(p,1),a}\prod_{j=1}^{p-1} e_{j, j+1}.
\]
		\end{itemize}
	\end{enumerate}
\end{Proposition}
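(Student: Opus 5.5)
We prove the three parts of Proposition~\ref{prop} together from the permutation expansion \eqref{defdet}. The starting point is the structure of $L(x)$. All entries are constants except $L_{u,v}$ and $L_{v,u}$. Under the labeling \eqref{g12}, these are $L_{p,1}$ and $L_{1,p}$. By \eqref{gflo2} and \eqref{gfeb252}, $L_{p,1}$ contains the term $e_{(p,1),a}x^a$ and $L_{1,p}$ contains $e_{(p,1),a}x^{-a}$. One of these entries may also carry a constant edge weight, but only when $p=2$ and $1,2$ are adjacent in $W$. Even then the weight $e_{1,2}$ is a separate constant summand. Hence each product $\sigma D(x,\lambda)$ is a Laurent polynomial in $x$ with exponents in $\{-a,0,a\}$, and a monomial $x^{\pm a}$ can occur only if $\sigma$ uses the position $(p,1)$, respectively $(1,p)$.

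\textbf{Part A.} Fix $\sigma$ with $[x^a]\sigma D\neq 0$. Then $\sigma(p)=1$, and every other factor must be a nonzero entry of $L-\lambda I$. Decompose $\sigma$ into disjoint cycles. The cycle through $p$ is a closed walk in the quotient graph that uses the edge $(p,1)$ together with edges of $W$. Since $W$ is a tree (minimally connected), the only such cycle is $\eta=(1\cdots p)$; this is the uniqueness statement of Lemma~\ref{lem1}, traversed in the direction $p\to1$. Note that $\eta$ contributes $x^a$ only in this orientation, while the reverse orientation gives $x^{-a}$.

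There is a subtlety when $p=2$: there the position $(1,2)$ can use the constant weight $e_{1,2}$. This is consistent with the product $e_{(p,1),a}e_{1,2}$. The remaining cycles of $\sigma$ are arbitrary permutations of $U$, and they contribute only constants, so they contribute nothing to the $x$-degree.

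Summing over $\sigma=\eta\tau$ with $\tau\in S(U)$, we get
\[
[x^a]D=\operatorname{sgn}(\eta)\,\Bigl(e_{(p,1),a}\prod_{j=1}^{p-1}e_{j,j+1}\Bigr)\sum_{\tau}\operatorname{sgn}(\tau)\tau D .
\]
Here $\operatorname{sgn}(\eta)=(-1)^{p-1}$, and the last sum equals $\det(L|_U-\lambda I_{n-p})$. This gives Part A. One must check that no $\lambda$ factors enter the $\eta$-part, which holds because $\eta$ has no fixed points once $p\ge2$.

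\textbf{Part B.} The argument for $[x^{-a}]D$ is identical, using the reverse cycle $\eta^{-1}$. Since $W$ is a tree, the path edges have the same weights in both directions, and $\operatorname{sgn}(\eta^{-1})=\operatorname{sgn}(\eta)$, so $[x^{-a}]D=s(\lambda)$ as well. Alternatively, this follows from the symmetry \eqref{gfeb252}, which gives $D(x,\lambda)=D(x^{-1},\lambda)$. Because the only exponents in $x$ are $-a,0,a$, we may set $r(\lambda)=[x^0]D$, which yields \eqref{keyDet}.

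\textbf{Part C.} The $\lambda^n$ term comes only from the identity permutation, whose product is $\prod_i(L_{ii}-\lambda)$. This term has no $x$-dependence except in the degenerate case $u=v$, which cannot occur since $u\neq v$ when $p\ge2$. Hence $[\lambda^n]r=(-1)^n$. The remaining permutations each miss at least two diagonal positions, so they have $\lambda$-degree at most $n-2$ and do not affect the leading coefficient.

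For $s$, the polynomial $\det(L|_U-\lambda I_{n-p})$ has leading term $(-1)^{n-p}\lambda^{n-p}$. Multiplying by $(-1)^{p-1}$ gives the sign $(-1)^{n-1}$. The product $e_{(p,1),a}\prod_j e_{j,j+1}$ is nonzero because the edge weights are fixed nonzero, so $\deg s=n-p$.

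The main obstacle is the careful combinatorial justification in Part A. One must show that every permutation contributing to $x^a$ contains exactly the cycle $\eta$, with all its other cycles lying inside $U$. The key point is that such a cycle has to close up through tree edges, and in a tree the only closed walk as a permutation cycle is the unique path. One also has to handle the degenerate case $p=2$, where both $(1,2)$ and $(2,1)$ involve the same pair of vertices.
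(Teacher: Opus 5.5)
Your proposal is correct and follows essentially the same route as the paper. You expand $D$ over permutations, use the tree structure of $W$ (Lemma~\ref{lem1}) to isolate the unique cycle $\eta$ carrying $x^a$, and collect the remaining permutations of $U$ into $\det(L|_U-\lambda I_{n-p})$ with sign $(-1)^{p-1}$; Parts B and C then follow from the exponent set $\{-a,0,a\}$ and the symmetry $D(x,\lambda)=D(x^{-1},\lambda)$, and you simply supply details the paper leaves implicit. One minor point: nothing in the hypotheses forces $p\ge 2$, since the edge could join $u$ to $u+a$. In that case $\eta$ is the fixed point $1$, $L_{11}$ carries $e_{(1,1),a}(x^a+x^{-a})$, and your argument, including $[\lambda^n]r=(-1)^n$, goes through unchanged.
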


  \begin{proof}
By \eqref{eq:cycle} and \eqref{g12}, one has
\begin{equation}\label{g13}
[x^a] \eta D(x,\lambda)= e_{(p,1),a}\prod_{j=1}^{p-1} e_{j, j+1}.
\end{equation}
Now Part A  of Proposition \ref{prop} follows from Lemma \ref{lem1},  \eqref{g13} and \eqref{defdet}.

Parts B and C  of Proposition \ref{prop}  simply follow from \eqref{defdet} and Part A.

  \end{proof}

Let
\[
A_k=\{\mu\in \CC:\mu^k=1\},
\qquad
A_k^l=\left\{e^{\frac{2\pi i j}{k}}:j=0,\dots,l-1\right\}.
\]
We begin with a helper lemma.

\begin{Lemma}~\label{lemliu1}
Let
\[
F(x,\lambda)=r(\lambda)+s(\lambda)x+s(\lambda)x^{-1},
\]
where $r(\lambda)\neq 0$, the leading term of $r(\lambda)$ is $(-1)^n\lambda^n$, and $s(\lambda)\neq 0$ with $\deg s\le n-1$. Let $a$ be a positive integer.
If  $F(x^a,\lambda)$ is reducible, then $F(x^a,\lambda)$ falls into one of the following three cases:
\begin{enumerate}
    \item
    $F(x^a,\lambda)$ has  a  linear factor in $\lambda$.

    \item
    There exist $l\ge 2$ with $l\mid a$, and polynomials
$r_1(\lambda), s_1(\lambda), t_1(\lambda)\in \CC[\lambda]$, such that
\begin{equation}\label{liug1}
    F(x^a,\lambda)
    =
     \prod_{\mu\in A_l}
    \bigl(r_1(\lambda)+s_1(\lambda)\mu x^{a/l}+t_1(\lambda)\mu^{-1}x^{-a/l}\bigr),
\end{equation}
where
\[
r_1(\lambda)+s_1(\lambda)x^{a/l}+t_1(\lambda)x^{-a/l}
\]
is irreducible. Moreover, there exists $\xi\in A_l$ such that
\[
s_1(\lambda)=\xi\, t_1(\lambda).
\]
    \item
    There exist $l\ge 1$ with $l\mid a$, and polynomials $r_2(\lambda), s_2(\lambda)\in \CC[\lambda]$, such that
    \begin{equation}\label{liug2}
        F(x^a,\lambda)
        =
        \prod_{\mu\in A_l}
        \bigl(r_2(\lambda)+s_2(\lambda)\mu x^{a/l}\bigr)
        \bigl(r_2(\lambda)+s_2(\lambda)\mu x^{-a/l}\bigr),
    \end{equation}
    and $r_2(\lambda)+s_2(\lambda)x^{a/l}$ is irreducible. 
\end{enumerate}
\end{Lemma}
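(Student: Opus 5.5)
The plan is to treat $\lambda$ as living in the field $K=\CC(\lambda)$, study the factorization of $F(x^a,\lambda)$ first over $K[x^{\pm1}]$, and then descend to $\CC[x^{\pm1},\lambda]$ by Gauss's lemma. Set $y=x^a$ and $Q(y)=yF(y,\lambda)=s(\lambda)y^2+r(\lambda)y+s(\lambda)\in \CC[\lambda][y]$. Two structural facts drive the argument.
\begin{itemize}
\item[(i)] The two roots $y_1,y_2$ of $Q$ in an algebraic closure of $K$ satisfy $y_1y_2=1$.
\item[(ii)] $F(x^a,\lambda)$ is invariant under $x\mapsto x^{-1}$.
\end{itemize}
Any factor of $F(x^a,\lambda)$ is, up to a unit $cx^m$, a product of a content part in $\CC[\lambda]$ and primitive factors in $x$.

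First I would dispose of content. If some nonconstant $c(\lambda)\in\CC[\lambda]$ divides $F(x^a,\lambda)$, then any root $\lambda_0$ of $c$ gives a factor $\lambda-\lambda_0$, which is case (1). The same holds whenever some irreducible factor has $x$-degree zero. So from now on I assume every irreducible factor genuinely involves $x$, and $\gcd(r,s)=1$, since otherwise case (1) occurs.

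Next I split according to whether $Q$ is irreducible over $K$, i.e.\ whether $r^2-4s^2$ is a square in $\CC[\lambda]$.

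\emph{Split case.} Suppose $r^2-4s^2$ is a square, so $Q$ factors over $K$. Because $\gcd(r,s)=1$, Gauss's lemma gives $Q=(\alpha y+\beta)(\gamma y+\delta)$ with $\alpha\gamma=\beta\delta=s$. Coprimality of $r$ and $s$, together with the symmetry (ii), should force the second factor to be the reciprocal of the first. After absorbing constants this gives $F(y)=(r_2+s_2y)(r_2+s_2y^{-1})$. Substituting $y=x^a$, I would factor the binomial $r_2+s_2x^a$ over $K$ by Capelli's theorem. Let $l\mid a$ be maximal such that $-r_2/s_2$ is an $l$-th power in $K$; in $\CC[\lambda]$, after clearing content, this means both $r_2$ and $-s_2$ are $l$-th powers. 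Then
\[
r_2+s_2x^a=\prod_{\mu\in A_l}\bigl(\tilde r_2+\tilde s_2\mu x^{a/l}\bigr),
\]
with each factor irreducible. The companion factor $r_2+s_2x^{-a}$ splits in the same way. This is case (3).

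\emph{Irreducible case.} Suppose $Q$ is irreducible over $K$, and let $L=K(y_1)$. Then $F(x^a,\lambda)$ is, up to a unit, the norm $N_{L/K}(x^a-y_1)$. By Capelli's theorem applied over $L$, the irreducible factors of $x^a-y_1$ over $L$ are $x^{a/l}-\mu w$ for $\mu\in A_l$, where $w^l=y_1$ and $l\mid a$ is maximal. Taking norms produces irreducible factors over $K$ of the shape
\[
r_1+s_1\mu x^{a/l}+t_1\mu^{-1}x^{-a/l},
\]
where $N(w)=w\bar w$ is an $l$-th root of unity, because $y_1y_2=1$. The symmetry (ii) permutes these factors via $x\mapsto x^{-1}$, and I would use this to deduce $s_1=\xi t_1$ for some $\xi\in A_l$. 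This is case (2), with $l\ge2$ because the problem is reducible.

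The main obstacle is the Capelli step. The exceptional case $-4w^4$ for binomials with $4\mid a$ must be checked, since it produces factors that do not visibly have the stated form. The root-of-unity bookkeeping for $N(w)$ also needs care, as it is what pins down the relation $s_1=\xi t_1$.
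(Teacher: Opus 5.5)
Your proof is correct, but it takes a genuinely different route from the paper's.

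\textbf{The paper's route.} The paper never leaves $\CC[x^{\pm1},\lambda]$. It fixes an irreducible factor $g$, normalized so that its top $\lambda$-coefficient sits at $x^0$; this normalization is where $\deg s\le n-1$ is used. It then exploits the symmetries $x\mapsto\mu x$ for $\mu\in A_a$, and $x\mapsto x^{-1}$. Counting how many distinct rotates $g(\mu x,\lambda)$ must divide $F(x^a,\lambda)$, and comparing extreme $x$-degrees, forces $g=g_0+g_{b_2}x^{b_2}$ or $g=g_{-b_2}x^{-b_2}+g_0+g_{b_2}x^{b_2}$ with $b_2\mid a$. The orbit of $g$ then exhausts $F(x^a,\lambda)$, and $g(x^{-1},\lambda)=g(\mu x,\lambda)$ yields $s_1=\xi t_1$.

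\textbf{Your route.} You pass to $K=\CC(\lambda)$. There the dichotomy becomes whether the root $y_1$ of $sy^2+ry+s$ lies in $K$ or generates a quadratic extension $L$, and you factor $x^a-y_1$ by Capelli.

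\textbf{The points you flag are harmless.}
\begin{itemize}
\item Since $K\supset\CC$ and $-4=(1+i)^4$, the exceptional $-4w^4$ condition is subsumed by the square condition.
\item Maximality of $l$, together with algebraic closedness of $\CC$ (roots of unity have all $p$-th roots), gives irreducibility of each $x^{a/l}-\mu w$.
\item The relation $s_1=\xi t_1$ falls out of the identity $N_{L/K}(x^{a/l}-\mu w)=x^{2a/l}-\mu(w+\bar w)x^{a/l}+\mu^2w\bar w$. Here $(w\bar w)^l=y_1y_2=1$, and the roots of unity in $\CC(\lambda)$ are constants, so $w\bar w\in A_l$. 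Thus symmetry (ii) is not needed.
\item Similarly, in the split case the reciprocal shape follows from $y_1y_2=1$ plus primitivity of the two linear factors.
\end{itemize}

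\textbf{One point you should state.} You need to say why each norm is irreducible over $K$. If $N_{L/K}(h)=h\bar h$ split over $K$, then $h$ itself would lie in $K[x]$ up to a scalar. That would force $w\in K$, hence $y_1=w^l\in K$, a contradiction.

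\textbf{Comparison.} Your approach yields more: the exact $l$ (the largest divisor of $a$ with $y_1$ an $l$-th power in $K(y_1)$), distinctness of the factors, and no use of $\deg s\le n-1$. The cost is importing Capelli and norm arguments. The paper's argument is entirely elementary and self-contained.
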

\begin{proof}
If $F(x^a,\lambda)$ has a linear factor in $\lambda$, then we are in Case 1. Thus assume that $F(x^a,\lambda)$  has no linear factors.

    Let $g(x,\lambda)$ be a irreducible factor of $ F(x^a,\lambda)$. 
    Multiplying $g$ by a Laurent monomial in $x$ if necessary, we may write
\begin{equation}\label{eq:g-normal-form}
    g(x,\lambda)=\sum_{j=b_1}^{b_2} g_j(\lambda)x^j,
\end{equation}
where $b_1\le 0\le b_2$, $g_0(\lambda) $ is monic and nonzero, and
\[
\deg_\lambda g_0(\lambda)\ge \deg_\lambda g_j(\lambda)
\qquad\text{for all }j.
\]

Take a nonzero monomial  $g_b(\lambda) x^b$ with  $b\neq 0$,  $b_1\leq b\leq b_2$ into consideration.
 Since
\[
F((\mu x)^a,\lambda)=F(x^a,\lambda)
\qquad\text{for every }\mu\in A_a,
\]
it follows that
\[
g(\mu x,\lambda)\mid F(x^a,\lambda)
\qquad\text{for every }\mu\in A_a.
\]
Moreover, because $g_0\neq 0$, we have that
if $\mu_1^b\neq \mu_2^b$, then 
 factors $g(\mu_1 x,\lambda)$ and $g(\mu_2 x,\lambda)$   are distinct.

 This implies   that there are at least $  \frac{a}{\gcd(|b|,a)}$ many distinct irreducible factors of $F(x^a,\lambda)$ of the form $g(\mu x,\lambda)$, $\mu\in A_a$. Note that there is one such factor for each $\mu \in A_a^{\frac{a}{\gcd(|b|,a)}}$.

We now  split the proof into two cases.

\smallskip
\noindent\textit{Case I: $g$ has only nonnegative or only nonpositive powers of $x$.}

 Without loss of generality,  we  assume that $g$ has only nonnegative powers of $x$.
 
 Take $b=b_2$. 
 The highest $x$-degree of
\[
\prod_{\mu\in A_a^{a/\gcd(a,b_2)}} g(\mu x,\lambda)
\]
is
\[
b_2\frac{a}{\gcd(a,b_2)}.
\]
Since this product divides $F(x^a,\lambda)$, whose highest power of $x$ is $x^a$, we must have
\[
b_2\frac{a}{\gcd(a,b_2)}\le a.
\]
This leads to $\gcd(a,b_2)=b_2$, i.e. $b_2\mid a$.

We are going to prove that
\[
g_j(\lambda)=0
\qquad\text{for all }j=1,\dots,b_2-1.
\]
Indeed, if $g_j\neq 0$ for some $1\le j\le b_2-1$, then $g(\mu x,\lambda)$, $\mu\in A_a^{a/\gcd(a,j)}$ are distinct factors, and hence 
\begin{equation}\label{gapr195}
    \prod_{\mu\in A_a^{a/\gcd(a,j)}} g(\mu x,\lambda)
\end{equation}
divides $F(x^a,\lambda)$.  Clearly, the highest $x$-degree in \eqref{gapr195} is
\[
b_2\frac{a}{\gcd(a,j)}>a,
\]
a contradiction. Therefore, we conclude that
\[
g(x,\lambda)=g_0(\lambda)+g_{b_2}(\lambda)x^{b_2}.
\]

Since $F(x^a,\lambda)=F(x^{-a},\lambda)$, the polynomial
\[
g(x^{-1},\lambda)=g_0(\lambda)+g_{b_2}(\lambda)x^{-b_2}
\]
also divides $F(x^a,\lambda)$. Let
\[
l=\frac{a}{b_2}.
\]
Since the product of the factors
\begin{equation}\label{gapr262}
    g(\mu x,\lambda),
\qquad
g(\mu x^{-1},\lambda),
\qquad
\mu\in A_a^l,
\end{equation}
already contribute to $x^a$ and $x^{-a}$, matching the extreme $x^\pm$-degrees of $F(x^a,\lambda)$,  we conclude that  $F(x^a,\lambda)$ has no other factors other than the ones in \eqref{gapr262}. 

It follows that 
\[
F(x^a,\lambda)
=
(-1)^n\prod_{\mu\in A_a^l}
\bigl(g_0(\lambda)+g_{b_2}(\lambda)\mu^{b_2}x^{b_2}\bigr)
\bigl(g_0(\lambda)+g_{b_2}(\lambda)\mu^{b_2}x^{-b_2}\bigr).
\]
Since
\[
\{\mu^{b_2}:\mu\in A_a^l\}=A_l,
\]
this gives \eqref{liug2} after setting
\[
r_2(\lambda):=(-1)^{n/2l}g_0(\lambda),
\qquad
s_2(\lambda):=(-1)^{n/2l}g_{b_2}(\lambda).
\]

This is  Case 3.

Case II: $g$ has terms of both negative and positive degree in $x$.

Without loss of generality, assume $b_2\geq |b_1|$.

Exactly as above, the product
\[
\prod_{\mu\in A_a^{a/\gcd(a,b_2)}} g(\mu x,\lambda)
\]
divides $F(x^a,\lambda)$, and its highest $x$-degree is
\[
b_2\frac{a}{\gcd(a,b_2)}.
\]
Since the highest $x$-degree of $F(x^a,\lambda)$ is $a$, we again conclude that
\[
b_2\mid a.
\]

We next claim that
\[
g_j(\lambda)=0
\qquad\text{for all }j\text{ with }0<|j|<b_2.
\]
Indeed, if $g_j\neq 0$ for some such $j$, then
\[
\prod_{\mu\in A_a^{a/\gcd(a,j)}} g(\mu x,\lambda)
\]
divides $F(x^a,\lambda)$, while its highest $x$-degree is
\[
b_2\frac{a}{\gcd(a,j)}>a,
\]
which is impossible. Therefore
\[
g(x,\lambda)=g_{-b_2}(\lambda)x^{-b_2}+g_0(\lambda)+g_{b_2}(\lambda)x^{b_2},
\]
with both $g_{-b_2}$ and $g_{b_2}$ nonzero (otherwise, it reduces to  Case I).

Let
\[
l=\frac{a}{b_2}.
\]
The orbit factors
\[
g(\mu x,\lambda),
\qquad
\mu\in A_a^l,
\]
are pairwise distinct, and their product already has $x^a$ and $x^{-a}$. Hence, as in Case~I,  there are  no remaining factor. Thus 
\begin{equation}\label{gapr263}
    F(x^a,\lambda)=(-1)^n\prod_{\mu\in A_a^l} g(\mu x,\lambda).
\end{equation}

Writing
\[
r_1(\lambda)=(-1)^{n/l}g_0(\lambda),
\qquad
s_1(\lambda)=(-1)^{n/l}g_{b_2}(\lambda),
\qquad
t_1(\lambda)=(-1)^{n/l}g_{-b_2}(\lambda)
\]
and using again that
\[
\{\mu^{b_2}:\mu\in A_a^l\}=A_l,
\]
we obtain the factorization \eqref{liug1}.

Since $F(\lambda,x^a)=F(\lambda,x^{-a})$ and $g(x^{-1},\lambda)$ is also a factor, by \eqref{gapr263}, one has that there exists $\mu\in A_a^l$ such that $g(x^{-1},\lambda)=g(\mu x,\lambda)$.

This implies that  there exists $\xi\in A_l$ such that
\[
s_1(\lambda)=\xi\, t_1(\lambda).
\]

This is  Case 2.
  
    \end{proof}
\smallskip

\section{No linear factors}
In this section, our goal is to prove
\begin{Theorem}\label{thmonecase1}
    Assume that  $\Gamma$ has a fundamental domain $W$
which is minimally connected and there exists exactly one edge between $W$ and $\Gamma\backslash W$.

After fixing non-zero edge weights,  
for a generic potential, the dispersion polynomial
$D(x,\lambda)$  of $\Gamma$  has no linear factors. 
\end{Theorem}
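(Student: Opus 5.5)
The plan is to reduce to exhibiting a single potential, and then to split a hypothetical linear factor into a constant case and a nonconstant case.

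\emph{Reduction to one potential.} Regard the potential $V=(V_1,\dots,V_n)$ as the parameter $\xi\in\CC^n$, with the nonzero edge weights fixed. The polynomial $(-1)^nD(x,\lambda)$ is then a family of the type considered in Section~\ref{Sec:2}. The set of $V$ for which $D$ has a factor of $\lambda$-degree $1$ is exactly $\mathcal R_1\cup\mathcal R_{n-1}$. By Proposition~\ref{prop:closed_degree_split} this set is Zariski closed. So it suffices to show it is not all of $\CC^n$, i.e.\ to produce one potential for which $D$ has no linear factor, or to show that a linear factor forces a nontrivial polynomial relation in $V$.

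\emph{Form of a linear factor.} By Proposition~\ref{prop}, $D=r(\lambda)+s(\lambda)(x^a+x^{-a})$ with $\deg r=n$ and $\deg s=n-p$. Since $D$ is monic up to sign in $\lambda$, a linear factor can be taken as $\lambda-f(x)$ with $f\in\CC[x^{\pm}]$, and then
\[
r(f(x))+s(f(x))\,(x^a+x^{-a})=0 .
\]

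\emph{Constant case.} If $f\equiv c$ is constant, comparing coefficients of $x^0$ and $x^a$ gives $r(c)=s(c)=0$, so $\operatorname{Res}(r,s)=0$. Now $s(\lambda)$ is, up to a nonzero constant, $\det(L|_U-\lambda I_{n-p})$, whose roots depend only on $V_{p+1},\dots,V_n$. Putting $x=1$ in the identity shows $r(c)=0$ forces $c$ to be an eigenvalue of $L(1)$. Take $V_1=\dots=V_p=t$ and let $t\to\infty$ with $V|_U$ fixed. Then the roots of $s$ stay bounded, while the $x$-independent part $r$ has $p$ roots of size about $t$ and its remaining roots are perturbations of those of $L|_U$ determined through a Schur complement. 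I expect this to give $\operatorname{Res}(r,s)\not\equiv0$. If this is delicate, I would instead argue directly that $\operatorname{Res}(r,s)$ is a nonzero polynomial in $V_1$: the resultant has the form $\prod_{s(c)=0} r(c)$, and $r(c)$ is a nonconstant polynomial in $V_1$, because $r$ contains the diagonal product with the factor $(V_1-\lambda)$ and $1\in\eta$ is not in $U$.

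\emph{Nonconstant case.} Let $f$ have top exponent $e>0$ and leading coefficient $c$ (the case of only negative exponents is symmetric under $x\mapsto x^{-1}$, which preserves $D$). Comparing top $x$-degrees gives $ne=(n-p)e+a$, so $e=a/p$. The leading coefficient then satisfies $c^p=\pm e_{(p,1),a}\prod_j e_{j,j+1}$, which is independent of $V$. Invariance under $x\mapsto\mu x$ ($\mu\in A_a$) and $x\mapsto x^{-1}$ produces the factors $\lambda-f(\mu x^{\pm1})$. Running the orbit-counting argument of Lemma~\ref{lemliu1}, now with $\lambda$-degree as the budget $n$, should force $f=c x^{e}+c_0+c'x^{-e}$. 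Substituting this back and comparing the remaining coefficients of $x^{ke}$ should give equations that pin down $c_0$ in terms of the weights. The coefficient of the middle power then yields an identity $P(V)=0$ with $P$ nonzero, for example through the $\lambda^{n-1}$-coefficient $-\operatorname{tr}L=-\sum V_i$ entering linearly. That would finish the proof by the reduction above.

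I expect the main obstacle to be the nonconstant case: showing that the surviving coefficient identity genuinely involves $V$ and is not identically satisfied. If the coefficient bookkeeping gets out of hand, my first alternative is asymptotic. Scale $V=t v$ with distinct $v_i$ and restrict to $|x|=1$. There $f$ is bounded, yet it must be an eigenvalue of $L(x)$, and every eigenvalue lies within $O(1)$ of some $t v_i$. So $f-t v_i$ is bounded on the circle for a fixed $i$. Combined with the fixed leading coefficient $c\neq0$ and the constraint that the $\lambda$-degree-one factors must exhaust the Newton polygon edge from $(0,n)$ to $(a,n-p)$, this should lead to a contradiction as $t\to\infty$.
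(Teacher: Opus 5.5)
Your reduction, and your reformulation of the constant case as $\operatorname{Res}(r,s)\not\equiv 0$, are the same starting point as the paper's. In fact that reformulation is the whole statement. In the paper a ``linear factor'' means an $x$-independent factor $\lambda-c$ (Case~1 of Lemma~\ref{lemliu1}). Factors of $\lambda$-degree one that depend on $x$ fall under Cases~2 and~3 and are excluded later, so your nonconstant case is not needed here.

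The nonconstant sketch is also loose as written. The orbit count must use the $x$-degree budget $a$; the $\lambda$-degree budget $n$ gives nothing when $a\le n$. And the intermediate $x^{ke}$-coefficients of an orbit product vanish automatically by $A_p$-symmetry, so they cannot pin down $c_0$.

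The genuine gap is that the central claim $\operatorname{Res}(r,s)\not\equiv 0$ is never established. Neither of your two arguments for it works.

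\emph{The direct argument is wrong.} The $V_1$-coefficient of $r(c)$ is $D_{W\setminus\{1\}}(c)$. Whenever a component $U_i$ of $U$ hangs off vertex $1$ (always the case if $p=1$), $D_{U_i}$ divides $D_{W\setminus\{1\}}$. So $r(c)$ does not depend on $V_1$ at any root $c$ of $D_{U_i}$. For example, take $p=1$, $U=\{2\}$ and weight $b$ on $(1,2)$. Then $s$ is a nonzero multiple of $V_2-\lambda$, $r=(V_1-\lambda)(V_2-\lambda)-b^2$, and $r(V_2)=-b^2$.

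\emph{The asymptotic argument is inconclusive at leading order.} With $V_1=\dots=V_p=t$ you get $t^{-p}r(\lambda)\to D_U(\lambda)$, so the bounded roots of $r$ converge exactly to the roots of $s$.

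What is really needed shows up in the bridge expansion. Suppose $U_1$ is attached to $j_0\in\eta$ by an edge $(j_0,u_1)$ of weight $\beta$. Then at every root $c$ of $D_{U_1}$,
\[
r(c)=D(x,c)=-\beta^2\,D_{W\setminus(U_1\cup\{j_0\})}(c)\,D_{U_1\setminus\{u_1\}}(c).
\]
So everything hinges on the following tree statement: for a finite tree with nonzero weights, $D_{U_1}$ and $D_{U_1\setminus\{u_1\}}$ (equivalently $D_{\{j_0\}\cup U_1}$ and $D_{U_1}$) have no common root for a generic potential. This is a genuinely generic fact. It fails, for instance, when $u_1$ has two leaf neighbours with equal potentials; then $D$ really does acquire a factor $\lambda-c$.

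This is exactly the input the paper supplies. Lemma~\ref{Lem:new71} removes the vertices outside $\{j_0\}\cup U_1$ one at a time, using the leading $v_j$-coefficient of the resultant. Theorem~\ref{Thm:8.1}, from \cite{FaustLiu2025RareFlatBands}, then gives the non-sharing of roots. Without that input, or your own inductive proof of it, the proposal does not prove the theorem.
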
 
For any subset $U\subset W$, we write
\[
D_U(z,\lambda)=\det(L|_U(z)-\lambda I).
\]
If $D_U(z,\lambda)$ is independent of $z$, we simply write $D_U(\lambda)$.

\begin{Theorem}[{\cite[Theorem 8.1]{FaustLiu2025RareFlatBands}}]\label{Thm:8.1}
Assume that  the  quotient graph of $\Gamma$ is minimally connected.

	Let $W$ be a fundamental domain.  Assume that exists a $W_1 \subset W$   with exactly one less vertex such that $L(z)|_{W_1}$ is independent of $z$. If all edge weights are nonzero,  then for a generic choice of potential, $D(z,\lambda)$ and $D_{W_1}(\lambda)$ cannot have a common linear factor.
\end{Theorem}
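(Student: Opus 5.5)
The plan is to argue pointwise in the potential. I will show that, for a generic potential, every root $c$ of $D_{W_1}(\lambda)$ satisfies $D(z,c)\not\equiv 0$ as a Laurent polynomial in $z$. A common linear factor $\lambda-c$ of $D(z,\lambda)$ and $D_{W_1}(\lambda)$ is exactly a root $c$ of $D_{W_1}$ with $D(z,c)\equiv 0$, so this suffices.

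Write $W=W_1\cup\{w\}$. Since $L|_{W_1}$ is independent of $z$, all $z$-dependence of $L(z)$ sits in the row and column of $w$. Write $L(z)_{w,w}=V_w+\ell(z)$, with $\ell(z)$ coming from loops $(w,w+a)$, and let $b(z)\in\CC[z^\pm]^{W_1}$ be the column of $L(z)$ indexed by $w$ and restricted to $W_1$. By \eqref{gfeb252}, the corresponding row is $b(z^{-1})^T$.

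First I impose genericity on the potential restricted to $W_1$. By Lemma~\ref{lemsimple}, $L|_{W_1}$ has $n-1$ simple eigenvalues on a nonempty Zariski-open set. For $M=L|_{W_1}-cI$ with $c$ a simple eigenvalue, $\operatorname{adj}(M)$ has rank one, and
\[
\operatorname{adj}(M)=\kappa\, v v^T,\qquad \kappa=\prod_{c'\neq c}(c'-c)\neq 0,
\]
where $v$ is an eigenvector normalized with $v^Tv=1$. The matrix is complex symmetric, and the sign and normalization of $\kappa$ are irrelevant. Expanding the determinant along the row and column of $w$ (cofactor/Schur expansion) gives
\[
D(z,c)=(V_w+\ell(z)-c)\,D_{W_1}(c)-b(z^{-1})^T\operatorname{adj}(M)\,b(z)=-\kappa\,\bigl(v^Tb(z^{-1})\bigr)\bigl(v^Tb(z)\bigr),
\]
using $D_{W_1}(c)=0$. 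Since $\CC[z^\pm]$ is a domain, it suffices to show that $v^Tb(z)\not\equiv 0$.

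The quotient graph is a tree, so $W_1$ is a forest whose components $T_1,\dots,T_m$ are each joined to $w$ in the quotient. Because the quotient is simple, each neighbor $j\in W_1$ of $w$ contributes its own coordinate $b_j(z)=\sum_a E((j,w+a))z^{a}$, which is a nonzero Laurent polynomial for nonzero edge weights. The eigenvector $v$ for $c$ is supported on a single component $T_i$, provided the components have disjoint spectra, which is again a generic condition. Then
\[
v^Tb(z)=\sum_{j\in T_i,\ j\sim w} v_j\, b_j(z).
\]
This is nonzero if $v_j\neq 0$ for some neighbor $j\in T_i$ of $w$. Such a neighbor exists because the quotient is connected.

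The main obstacle is the claim that, for a generic potential on a tree $T$ with nonzero edge weights, every eigenvector is nowhere vanishing. I would prove it as follows. If $v_j=0$, then removing $j$ splits $T$ into subtrees $T'_1,\dots,T'_q$. The restriction of $v$ to some $T'_k$ is a nonzero eigenvector of $L|_{T'_k}$ with eigenvalue $c$. Hence $D_T(c)=D_{T'_k}(c)=0$, so $\operatorname{Res}(D_T,D_{T'_k})=0$. This resultant is a polynomial in the potential, and it is not identically zero: sending $V_j\to\infty$ separates the eigenvalue near $V_j$ from the spectrum of $L|_{T\setminus\{j\}}$, while eigenvalue interlacing-type arguments, or an induction on $|T|$ using Gershgorin with widely separated potentials, show the remaining spectra can be made disjoint. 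Intersecting finitely many nonempty Zariski-open conditions over all pairs $(j,k)$ and all trees $T_i$ gives the genericity.

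With these open conditions in hand, $D(z,c)\not\equiv 0$ for every root $c$ of $D_{W_1}$, which proves Theorem~\ref{Thm:8.1}. The potential $V_w$ plays no role.
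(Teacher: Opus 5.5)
Your reduction is correct, but the step you flag as the main obstacle is the whole content of the theorem, and the argument you give for it does not work.

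The reduction goes as follows. A common factor $\lambda-c$ means $D(z,c)\equiv 0$. For a simple eigenvalue $c$ of the complex symmetric matrix $L|_{W_1}$, the bordered-determinant expansion gives $D(z,c)=-\kappa\,(v^Tb(z^{-1}))(v^Tb(z))$, so it remains to show $v^Tb(z)\not\equiv 0$. You should say explicitly that, because the quotient is a tree, each component $T_i$ contains \emph{exactly one} quotient-neighbor $j_i$ of $w$. With several neighbors the sum $\sum_j v_jb_j(z)$ could cancel, so ``some $v_j\neq 0$'' would not suffice. Since $\kappa v_{j_i}^2=\operatorname{adj}(M)_{j_ij_i}=D_{T_i\setminus\{j_i\}}(c)\prod_{i'\neq i}D_{T_{i'}}(c)$, the condition $v_{j_i}\neq 0$ says precisely that $c$ is not a root of $D_{T_i\setminus\{j_i\}}$.

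To see why your sketch fails, expand along the row and column of $j$:
\[
D_T(\lambda)=(V_j-\lambda)\prod_{k'}D_{T'_{k'}}(\lambda)-\sum_{k'}e_{jj_{k'}}^2\,D_{T'_{k'}\setminus\{j_{k'}\}}(\lambda)\prod_{k''\neq k'}D_{T'_{k''}}(\lambda),
\]
where $j_{k'}$ is the neighbor of $j$ in $T'_{k'}$. At every root of $D_{T'_k}$ the $V_j$-term vanishes, so $\operatorname{Res}(D_T,D_{T'_k})$ does not depend on $V_j$ at all.
\begin{itemize}
\item Sending $V_j\to\infty$ therefore gives no information. The other eigenvalues of $L|_T$ simply converge to the spectrum of $L|_{T\setminus\{j\}}$, which is exactly where collisions occur.
\item Gershgorin discs with widely separated potentials cannot distinguish the eigenvalues of $L|_T$ and of $L|_{T'_k}$ that sit near the same $V_i$, since these differ only in lower-order terms.
\item Interlacing requires Hermitian matrices, but the fixed weights are arbitrary nonzero complex numbers. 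Even in the Hermitian case it only gives weak interlacing.
\end{itemize}

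The same identity supplies the missing step. Up to sign,
\[
\operatorname{Res}(D_T,D_{T'_k})=\pm\, e_{jj_k}^{2|T'_k|}\operatorname{Res}\bigl(D_{T'_k},D_{T'_k\setminus\{j_k\}}\bigr)\prod_{k''\neq k}\operatorname{Res}\bigl(D_{T'_k},D_{T'_{k''}}\bigr).
\]
The factors with $k''\neq k$ are not identically zero, because distinct subtrees carry disjoint sets of potential variables: shift the potential on $T'_{k''}$ by a large constant. The remaining factor has the same form for the smaller tree $T'_k$ and its vertex $j_k$. Induction on $|T|$, with base case $D_\emptyset=1$, then shows $\operatorname{Res}(D_T,D_{T\setminus\{j\}})\not\equiv 0$ for every tree $T$ and vertex $j$. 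You only need this for $j=j_i$ in each $T_i$, and with it inserted your proof is complete.
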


\smallskip
\begin{Lemma}\label{Lem:new71}
Let $\Gamma$ be a $\ZZ^d$-periodic graph, and let $U\subset W$ be a subset of vertices such that $D_U(\lambda)$ is independent of $z$. Suppose that, for a generic choice of potential, $D(z,\lambda)$ and $D_U(\lambda)$ share a linear factor. If $j$ is a vertex in $W\setminus U$, then $D_{W\setminus\{j\}}(z,\lambda)$ and $D_U(\lambda)$ also share a linear factor for every potential.
\end{Lemma}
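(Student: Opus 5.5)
The plan is to exploit that the potential $v_j$ at the vertex $j\in W\setminus U$ enters the problem only linearly, and not at all on the $U$ side. Expanding $\det(L(z)-\lambda I)$ along row $j$ gives
\[
D(z,\lambda)=v_j\,D_{W\setminus\{j\}}(z,\lambda)+R(z,\lambda),
\]
where neither $R$ nor $D_{W\setminus\{j\}}$ depends on $v_j$. Since $j\notin U$, $D_U(\lambda)$ does not depend on $v_j$ either. A common linear factor of $D(z,\lambda)$ and $D_U(\lambda)$ has the form $\lambda-c$. Here $c$ is one of the finitely many roots of $D_U$, and these roots depend only on the potentials on $U$ and the edge weights. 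Having this common factor means $D(z,c)\equiv 0$ as a Laurent polynomial in $z$.

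\textbf{Step 1.} Write the potential as $(v_j,v')$, where $v'$ collects the potentials at all other vertices. By hypothesis there is a nonempty Zariski-open set $\mathcal O\subset\CC^n$ on which the common linear factor exists. For $v'$ in a nonempty Zariski-open subset of $\CC^{n-1}$, the slice $\{v_j:(v_j,v')\in\mathcal O\}$ is cofinite in $\CC$; for instance, take a nonzero polynomial $P$ vanishing on the complement of $\mathcal O$, and require that $P(\cdot,v')$ is not identically zero. Fix such a $v'$. The finitely many roots $c$ of $D_U$ do not move as $v_j$ varies. By pigeonhole, one root $c$ satisfies
\[
v_j\,D_{W\setminus\{j\}}(z,c)+R(z,c)\equiv 0
\]
for infinitely many values of $v_j$. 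Comparing each $z$-coefficient, which is affine in $v_j$, we get $D_{W\setminus\{j\}}(z,c)\equiv 0$. Hence $\lambda-c$ divides both $D_{W\setminus\{j\}}(z,\lambda)$ and $D_U(\lambda)$, so the desired conclusion holds for a Zariski-dense set of potentials.

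\textbf{Step 2.} Pass from a dense set of potentials to every potential by a closedness argument. This is the step needing the most care. Consider the set
\[
Z=\{(v,c)\in\CC^n\times\CC:\ D_U(c)=0,\ [z^b]D_{W\setminus\{j\}}(z,c)=0\ \text{for all } b\}.
\]
It is Zariski closed, being cut out by finitely many polynomial equations in $(v,c)$. The set of potentials for which the two polynomials share a linear factor is exactly the projection of $Z$ to $\CC^n$.

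Since $D_U(c)$ is, up to sign, monic in $c$, the projection $\CC^n\times\CC\supset\{D_U=0\}\to\CC^n$ is a finite morphism, and finite morphisms are closed. Alternatively, one can argue as in Section~\ref{Sec:2}: compactify $c$ in $\mathbb P^1$ and note that monicity excludes the point at infinity. Either way the projection of $Z$ is Zariski closed. Since it contains a dense set by Step 1, it equals all of $\CC^n$, which proves the lemma for every potential.

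The main subtlety lies in the bookkeeping of Step 1. One must make sure that the same root $c$ works for many values of $v_j$, which is why it matters that the roots of $D_U$ are independent of $v_j$, rather than merely that a linear factor exists for each $v_j$. One must also check that the generic set restricts to a cofinite set on a generic line in the $v_j$-direction. The expansion along row $j$ itself is routine.
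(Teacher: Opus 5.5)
Your proof is correct. It rests on the same two observations as the paper's proof: first, $D=v_jD_{W\setminus\{j\}}+h$ with $h$ and $D_{W\setminus\{j\}}$ free of $v_j$; second, the roots of $D_U$ do not depend on $v_j$ because $j\notin U$. Where you differ is in execution.

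The paper works with the single object $\operatorname{Res}(D,D_U)$. Up to sign, this equals $\prod_i D(z,c_i)$ over the roots $c_i$ of $D_U$, counted with multiplicity. It is a polynomial in the potentials with Laurent-polynomial coefficients in $z$, so vanishing for generic potentials already means vanishing identically. Its coefficient of $v_j^{|U|}$ is, up to sign, $\operatorname{Res}(D_{W\setminus\{j\}},D_U)$. That coefficient must therefore vanish identically, which gives the conclusion for every potential in one step.

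You instead track the roots by hand: pigeonhole over the $v_j$-independent roots along a generic line in the $v_j$-direction, then compare coefficients that are affine in $v_j$. This only gives the conclusion on a set of the form $\CC\times\mathcal O'$ with $\mathcal O'\subset\CC^{n-1}$ nonempty Zariski open. So you need a separate closedness argument, via the finite projection $\{D_U=0\}\to\CC^n$. That step is sound, but the resultant gives it for free, since your locus is exactly $\{v:\operatorname{Res}_\lambda(D_{W\setminus\{j\}},D_U)\equiv 0\}$.

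Your route has two merits: it exhibits the shared root $c$ explicitly, and it avoids the product-over-roots description of the resultant over $\CC[v][z^{\pm}]$. The paper's route is shorter and needs neither the slicing nor any properness or finiteness input.
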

\begin{proof}
Clearly, $D(z,\lambda)$ and $D_U(\lambda)$ share a linear factor if and only if
\begin{equation}\label{gres}
    \operatorname{Res}(D,D_U)=0.
\end{equation}
Since the resultant is a polynomial in the potential variables, if \eqref{gres}
holds for generic potentials, then it holds identically for all potentials.

For any $j\in W\setminus U$, write
\begin{equation}\label{gres1}
    D(z,\lambda)=v_j D_{W\setminus\{j\}}(z,\lambda)+h(z,\lambda),
\end{equation}
where $h$ is independent of $v_j$.
Then
\[
\operatorname{Res}(D,D_U)
=
\operatorname{Res}(v_jD_{W\setminus\{j\}}+h,D_U)
\]
is a polynomial in $v_j$. Therefore, for $\operatorname{Res}(D,D_U)$, the highest-degree term in $v_j$ is 
\begin{equation}\label{gapr173}
(-1)^{|U|}\operatorname{Res}(D_{W\setminus\{j\}},D_U).
\end{equation}

By \eqref{gres} and \eqref{gapr173},  one has that
\[
\operatorname{Res}\bigl(D_{W\setminus\{j\}},D_U  \bigr)=0.
\]
Hence for every potential, $D_{W\setminus\{j\}}(z,\lambda)$ and $D_U(\lambda)$ share a linear factor.
\end{proof}

\begin{proof}[\bf Proof of Theorem~\ref{thmonecase1}]
Recall that
\[
D(x,\lambda)=r(\lambda)+(x^a+x^{-a})s(\lambda).
\]
Clearly, $D(x,\lambda)$ has a linear factor if and only if $r(\lambda)$ (also $D(x,\lambda)$) and $s(\lambda)$ have a common factor. In particular,
\begin{equation}\label{gapr176}
\{v\in \CC^n : D(x,\lambda)\text{ has a linear factor}\}
=
\{v\in \CC^n:\operatorname{Res}(D(x,\lambda),s(\lambda))=0\}.
\end{equation}
Hence the set of potentials for which $D(x,\lambda)$ has a linear factor is Zariski closed.

We are going to prove Theorem~\ref{thmonecase1} by contradiction. Assume that Theorem~\ref{thmonecase1} does not hold. Since the set in \eqref{gapr176} is Zariski closed, it follows that it must equal all of $\CC^n$. Therefore, for every potential, $D(x,\lambda)$ and $s(\lambda)$ share a linear factor.

Let
\[
U=\{p+1,p+2,\dots,n\}\subset W.
\]
If $U=\varnothing$, then by Proposition~\ref{prop}, $s(\lambda)$ is a nonzero constant, and hence $D(x,\lambda)$ cannot have a linear factor. Thus we  only need to consider the case $U\neq\varnothing$.

Let $U_1,\dots,U_k$ be the connected components of the graph induced by $U$. By Part~B of Proposition~\ref{prop},
\begin{equation}\label{g42}
s(\lambda)=\kappa\,D_U(\lambda)=\kappa\prod_{i=1}^k D_{U_i}(\lambda),
\end{equation}
where
\begin{equation}\label{gapr1910}
\kappa=(-1)^{p-1}e_{(p,1),a}\prod_{j=1}^{p-1}e_{j,j+1}\neq 0.
\end{equation}

For each $i=1,\dots,k$, let
\[
\mathcal R_i=\{v\in \CC^n:\operatorname{Res}(D(x,\lambda),D_{U_i}(\lambda))=0\}.
\]
Each $\mathcal R_i$ is Zariski closed. Moreover, by \eqref{g42}, if $D(x,\lambda)$ and $s(\lambda)$ share a linear factor, then $D(x,\lambda)$ must share a linear factor with at least one of the polynomials $D_{U_i}(\lambda)$. Therefore
\[
\CC^n\subset \bigcup_{i=1}^k \mathcal R_i.
\]
Therefore, one of the sets $\mathcal R_i$ must equal $\CC^n$. Without loss of generality, we may assume that $\mathcal R_1=\CC^d$ and hence for all potentials,
\begin{equation}\label{g43}
\operatorname{Res}(D(x,\lambda),D_{U_1}(\lambda))=0.
\end{equation}

Let $j_0\in\{1,2,\dots,p\}$ be the unique vertex such that $j_0$ is connected to $U_1$. Notice that, one by one, we can remove vertices in
\[
W\setminus (\{j_0\}\cup U_1),
\]
each time invoking Lemma~\ref{Lem:new71}. We conclude that
\[
D_{\{j_0\}\cup U_1}(x,\lambda)
\quad\text{and}\quad
D_{U_1}(\lambda)
\]
share a linear factor for every potential.

Now $\Gamma_{\{j_0\}\cup U_1}$ is a graph whose quotient graph is minimally connected, and  satisfies the other assumptions of Theorem~\ref{Thm:8.1}. Therefore Theorem~\ref{Thm:8.1} gives a contradiction.
\end{proof}
\section{Excluding  Case 2 of Lemma~\ref{lemliu1}} 
\begin{Theorem}\label{thmonecase2}
    Assume that  $\Gamma$ has a fundamental domain $W$
which is minimally connected and there exists exactly one edge between $W$ and $\Gamma\backslash W$.

After fixing non-zero edge weights,  
for a generic potential, the dispersion polynomial
$D(x,\lambda)$    can not be factorized in Case 2 appearing in Lemma \ref{lemliu1}. 
\end{Theorem}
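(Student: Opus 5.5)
The plan is to compare the coefficients of the extreme powers $x^{\pm a}$ on both sides of a Case 2 factorization \eqref{liug1}, using the normal form \eqref{keyDet} of Proposition~\ref{prop}. Suppose $D(x,\lambda)$ factors as in Case 2 of Lemma~\ref{lemliu1} for some $l\ge 2$ with $l\mid a$. Only the product of the $l$ terms $s_1(\lambda)\mu x^{a/l}$ contributes to the $x^a$-coefficient of the right-hand side of \eqref{liug1}. Hence
\[
s(\lambda)=\Bigl(\prod_{\mu\in A_l}\mu\Bigr)s_1(\lambda)^l=\pm s_1(\lambda)^l .
\]
The $x^{-a}$-coefficient gives, in the same way, $s=\pm t_1^l$. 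So $s$ is, up to a sign, an $l$-th power with $l\ge 2$. By Proposition~\ref{prop}, $s(\lambda)=\kappa D_U(\lambda)$ with $\kappa\neq0$ as in \eqref{gapr1910}. I would split according to whether $U=\{p+1,\dots,n\}$ is empty.

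\emph{Case $U\neq\varnothing$.} Lemma~\ref{lemsimple}, applied to the matrix $L|_U$ with the potentials on $U$ as variables, shows that for generic potential $D_U(\lambda)$ has $n-p\ge1$ simple roots. A polynomial of positive degree with only simple roots cannot be a constant multiple of $s_1^l$ with $l\ge 2$. Therefore Case 2 is excluded on a nonempty Zariski-open set of potentials.

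\emph{Case $U=\varnothing$.} Here $W$ is a single cycle of length $p=n$, and $s=\kappa$ is a nonzero constant, so $s_1$ and $t_1$ are constants $c$ and $c'$ with $c=\xi c'$ for some $\xi\in A_l$. Put $X=x^{a/l}$, choose $\zeta$ with $\zeta^2=\xi$, and set $Y=\zeta X$. Each factor in \eqref{liug1} then becomes $r_1+c'\zeta(\mu Y+\mu^{-1}Y^{-1})$. The product over $\mu\in A_l$ is invariant under $Y\mapsto \nu Y$ for $\nu\in A_l$ and under $Y\mapsto Y^{-1}$. A Chebyshev-type identity should then give
\[
\prod_{\mu\in A_l}\bigl(r_1+c'\zeta(\mu Y+\mu^{-1}Y^{-1})\bigr)=Q_{c'}(r_1)\pm(c'\zeta)^l(Y^l+Y^{-l}),
\]
where $Q_{c'}$ is a fixed polynomial of degree $l$ depending only on $c'$ and $l$. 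Comparing $x$-free parts, $r(\lambda)=Q_{c'}(r_1(\lambda))$ with $\deg r_1=n/l$. The constants $c'$ satisfy $c'^l=\pm\kappa$ up to roots of unity, so only finitely many $Q_{c'}$ occur. Consequently, the monic (up to sign) polynomials $r$ arising from a Case 2 factorization lie in a finite union of images of affine spaces of dimension $n/l<n$. That is a constructible set of dimension less than $n$ inside the $n$-dimensional space of monic degree-$n$ polynomials.

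It remains to show that the map from potentials to $r$ is dominant. By Proposition~\ref{prop}, $r(\lambda)=D(x,\lambda)-\kappa(x^a+x^{-a})$, which equals $\det(\operatorname{diag}(v)+B-\lambda I)$ for a fixed matrix $B$ (the cyclic adjacency data with the $x$-dependent entries handled as in \eqref{keyDet}). Its coefficients are perturbations of the elementary symmetric functions of $v_1,\dots,v_n$. At a point where the $v_i$ are distinct and large, the Jacobian of $v\mapsto r$ is nonsingular by the same Gershgorin/simple-eigenvalue reasoning as in Lemma~\ref{lemsimple}. Hence the image of $v\mapsto r$ is dense and cannot lie in the lower-dimensional set above. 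The preimage of that set is therefore contained in a proper Zariski-closed subset of $\CC^n$ (taking the closure of a constructible set of smaller dimension), and Case 2 fails generically.

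The main obstacle I expect is the $U=\varnothing$ case, specifically the Chebyshev-type identity. One must verify that all mixed terms $x^{jb}$ with $0<|j|<l$ cancel, and that the $x$-free part really has the form $Q(r_1)$ with $Q$ depending only on the constants. The dimension/dominance argument is routine by comparison.
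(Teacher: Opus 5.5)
Your proposal is correct, but it takes a different route from the paper in both cases.

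The paper splits by $\deg s$. For $\deg s\in\{1,n-1\}$ it notes that $l$ would divide both $n$ and $\deg s$, forcing $l=1$. For $\deg s=0$ and $2\le\deg s\le n-2$ it runs an inductive coefficient comparison, showing that the coefficients of $r_1$ (resp.\ $s_1$) have degree at most $k$ in the potential. It then compares top homogeneous parts: the squarefree monomial $\prod_j v_j$ (resp.\ $\prod_{j>p}v_j$) cannot be an $l$-th power $f(v)^l$.

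For $U\neq\varnothing$ you instead note that $s=\kappa D_U$ would have a root of multiplicity at least $l\ge2$. Lemma~\ref{lemsimple}, applied to the $x$-independent block $L|_U$, excludes this generically. This handles the paper's Subcases 1 and 3 at once, using a lemma already in the paper.

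For $U=\varnothing$ you replace the bookkeeping by the structural identity $r=Q(r_1)$ and a dimension count. The identity you flag as the main obstacle follows directly from the invariance you mention. Treat $r_1$ as an indeterminate $R$. For $j\ge1$, the coefficient of $R^j$ is a Laurent polynomial in $Y$ supported in $(-l,l)$ and invariant under $Y\mapsto\nu Y$, hence constant. The $R^0$ coefficient is supported on $\{-l,0,l\}$ and symmetric under $Y\mapsto Y^{-1}$.

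For dominance, the cleanest justification is that the coefficients of $r$ are $\pm e_k(v)$ plus lower-degree terms. Hence the top-degree part of the Jacobian determinant of $v\mapsto r$ is $\pm\prod_{i<j}(v_i-v_j)\not\equiv0$. Your Gershgorin sketch also works, but it needs the eigenvector perturbation made explicit.

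What your approach buys is a conceptual picture: Case 2 with constant $s$ is an $n/l$-dimensional family inside an $n$-dimensional one. It also avoids regarding the coefficients of $r_1$ as polynomial functions of $v$, which the paper's induction relies on. The paper's approach is more elementary and uses one top-degree mechanism for both of its nontrivial subcases.
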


\begin{proof}

In this section we rule out  Case 2 of Lemma~\ref{lemliu1}.  
Assume that  for a   choice of potential, $D(x,\lambda)$ admits a factorization as in Case 2 of Lemma~\ref{lemliu1}.

Recall that
\[
D(x,\lambda)=r(\lambda)+(x^a+x^{-a})s(\lambda),
\]
where
\[
\deg r(\lambda)=n,
\qquad
[\lambda^n]r(\lambda)=(-1)^n,
\qquad
\deg s(\lambda)=n-p.
\]
Let
\[
U=\{p+1,\dots,n\}.
\]

Recall  Case 2 of Lemma~\ref{lemliu1},   

\begin{equation}\label{eq:sec6-factor}
D(x,\lambda)
=
\prod_{\mu\in A_l}
\bigl(r_1(\lambda)+s_1(\lambda)\mu x^{a/l}+t_1(\lambda)\mu^{-1}x^{-a/l}\bigr),
\end{equation}
where
\[
r_1(\lambda)+s_1(\lambda)x^{a/l}+t_1(\lambda)x^{-a/l}
\]
is irreducible and
\[
s_1(\lambda)=\xi\, t_1(\lambda)
\qquad\text{for some }\xi\in A_l.
\]

Taking the coefficient of $x^a$ in \eqref{eq:sec6-factor}, we obtain
\begin{equation}\label{eq:sec6-sfactor}
s(\lambda)=[x^a]D(x,\lambda)
=
\Bigl(\prod_{\mu\in A_l}\mu\Bigr)s_1(\lambda)^l
=
(-1)^{l-1}s_1(\lambda)^l.
\end{equation}

Therefore
\begin{equation}\label{eq:sec6-degs}
l\,\deg s_1(\lambda)=l\,\deg t_1(\lambda)=\deg s(\lambda)=n-p,
\end{equation}
and by \eqref{eq:sec6-factor}, 
\begin{equation}\label{eq:sec6-degr}
n=l\,\deg r_1(\lambda).
\end{equation}

We now split the proof into two cases.

\smallskip
\noindent\textbf{Subcase 1: $\deg s(\lambda)\in \{1,n-1\}$.}

By \eqref{eq:sec6-degs} and \eqref{eq:sec6-degr}, the integer $l$ divides both $n$ and $\deg s(\lambda)$. Since
\[
\gcd(n,1)=\gcd(n,n-1)=1,
\]
this forces $l=1$, contradicting $l\ge 2$.

\smallskip
\noindent\textbf{Subcase 2: $\deg s(\lambda)=0$.}

In this case $p=n$  and 
$s(\lambda)$
is a nonzero constant depending only on the fixed edge weights. By \eqref{eq:sec6-sfactor}, both $s_1(\lambda)$ and $t_1(\lambda)$ are therefore nonzero constants depending only on the fixed edge weights.

Set
\[
m=\deg r_1(\lambda)=\frac{n}{l},
\qquad
r_1(\lambda)=\sum_{j=0}^{m} c_j(v)\lambda^{m-j}.
\]
For a set of variables $u$, let $\CC_k[u]$ denote the polynomials in $u$ of total degree at most $k$.

We first claim that for every $0\le k\le m$,
\begin{equation}\label{eq:sec6-coef-r1}
[\lambda^{n-k}]D(x,\lambda)=[\lambda^{n-k}]\,r_1(\lambda)^l.
\end{equation}
Indeed, in the factorization \eqref{eq:sec6-factor}, any term involving at least one of the $x^{\pm a/l}$-terms has $\lambda$-degree at most
\[
(l-1)m=n-m.
\]
Hence such terms do not contribute to $[\lambda^{n-k}]$ when $k<m$. When $k=m$, the only possible contributions of this type come from choosing one $x^{\pm a/l}$-term and the leading $\lambda^m$-term from each of the other $l-1$ factors. However, these contributions cancel because
\[
\sum_{\mu\in A_l}\mu=\sum_{\mu\in A_l}\mu^{-1}=0
\qquad (l\ge 2).
\]
This proves \eqref{eq:sec6-coef-r1}.

Next, for each $0\le k\le m$, the coefficient $[\lambda^{n-k}]D(x,\lambda)$ has the form
\begin{equation}\label{eq:sec6-left}
[\lambda^{n-k}]D(x,\lambda)
=
(-1)^{n-k}e_k(v_1,\dots,v_n)+h_k(v),
\end{equation}
where $e_k(v_1,\dots,v_n)$ is the $k$th elementary symmetric polynomial and
\[
h_k(v)\in \CC_{k-1}[v_1,\dots,v_n].
\]

From \eqref{eq:sec6-coef-r1} with $k=0$, we get
\[
c_0(v)^l=[\lambda^n]D(x,\lambda)=(-1)^n,
\]
so $c_0(v)$ is a nonzero constant.

We now prove inductively that for all $k=0,1,\dots,m,$
\[
c_k(v)\in \CC_k[v_1,\dots,v_n].
\]
Assume that this has already been proved for all indices $\leq k-1$, where $1\le k\le m$. Expanding $r_1(\lambda)^l$ gives
\begin{equation}\label{eq:sec6-right}
[\lambda^{n-k}]\,r_1(\lambda)^l
=
\sum_{\substack{j_1,\dots,j_l\in\{0,\dots,k\}\\ j_1+\cdots+j_l=k}}
c_{j_1}(v)\cdots c_{j_l}(v).
\end{equation}
Among these  terms appearing on the right side of \eqref{eq:sec6-right}, the only ones involving $c_k(v)$ are   one index   equal to $k$ and all remaining indices equal to $0$. Therefore
\begin{equation}\label{eq:sec6-right2}
[\lambda^{n-k}]\,r_1(\lambda)^l
=
l\,c_0(v)^{\,l-1}c_k(v)+Q_k(v),
\end{equation}
where
\begin{equation}\label{eq:sec6-Qk}
Q_k(v)
=
\sum_{\substack{j_1,\dots,j_l\in\{0,\dots,k-1\}\\ j_1+\cdots+j_l=k}}
c_{j_1}(v)\cdots c_{j_l}(v).
\end{equation}
By the induction hypothesis, every monomial appearing in $Q_k(v)$ has degree at most
\[
j_1+\cdots+j_l=k
\]
in the variables $v_1,\dots,v_n$. Hence
\[
Q_k(v)\in \CC_k[v_1,\dots,v_n].
\]
Combining \eqref{eq:sec6-coef-r1}, \eqref{eq:sec6-left}, and \eqref{eq:sec6-right2}, we find
\[
(-1)^{n-k}e_k(v_1,\dots,v_n)+h_k(v)
=
l\,c_0(v)^{\,l-1}c_k(v)+Q_k(v).
\]
Since the left-hand side belongs to $\CC_k[v_1,\dots,v_n]$ and $c_0(v)$ is a nonzero constant, it follows that
\[
c_k(v)\in \CC_k[v_1,\dots,v_n].
\]
This proves the claim.

If $\deg_v c_m(v)\le m-1$, then the constant term  of the right-hand side of \eqref{eq:sec6-left} has $v$-degree at most
\begin{equation}\label{gapr175}
l(m-1).
\end{equation}

 On the other hand, the  highest degree term of $v$ in $D(x,\lambda)$ is 
\begin{equation}\label{gapr174}
    \prod_{j=1}^n v_j,
\end{equation}
and hence has $v$-degree exactly $n=lm$. By \eqref{gapr175} and \eqref{gapr174}, \eqref{eq:sec6-left}  cannot hold generically.

If $\deg_v c_m(v)=m$, then on both sides of \eqref{eq:sec6-left} the highest degree in $v$ is $n$.

Let $f(v)$ be the homogeneous part of degree $m$ of $c_m(v)$.  Hence the highest homogeneous term in $v$ on the right-hand side is
\begin{equation}\label{gapr178}
    f(v)^l.
\end{equation}
By \eqref{gapr174} and \eqref{gapr178}, \eqref{eq:sec6-left} cannot hold generically.

Thus Subcase~2 cannot occur generically.

\smallskip
\noindent\textbf{Subcase 3: $2\le \deg s(\lambda)\le n-2$.}

Let
\[
n'=\deg s(\lambda)=n-p,
 \qquad
m'=\deg s_1(\lambda)=\frac{n'}{l},
\qquad
v'=(v_{p+1},\dots,v_n),
\]
and write
\begin{equation}\label{gapr171}
    s_1(\lambda)=\sum_{j=0}^{m'} b_j \lambda^{m'-j}.
\end{equation}
By \eqref{eq:sec6-sfactor}, \eqref{gapr1910} and Part B of Prop. \ref{prop},

\[
(-1)^{l-1}s_1(\lambda)^l=s(\lambda)=\kappa\,D_U(\lambda),
\]
The right-hand side depends only on the variables $v'$. Exactly the same coefficient-comparison argument as in Subcase~2 (in particular see \eqref{eq:sec6-coef-r1}), now applied to the identity
\begin{equation}\label{gapr1911}
(-1)^{l-1}s_1(\lambda)^l
=
\kappa\,D_U(\lambda).
\end{equation}

shows that generically with respect to $v'$, \eqref{gapr1911} can not happen.

Thus Subcase~3 also cannot occur generically.

We have therefore ruled out  Case 2 of Lemma~\ref{lemliu1} for a generic choice of potential.
\end{proof}

\section{Proof of main theorem and examples}
 \subsection{Proof of main theorem}
\begin{Theorem}\label{thmonecase3}
    Assume that  $\Gamma$ has a fundamental domain $W$
which is minimally connected and there exists exactly one edge between $W$ and $\Gamma\backslash W$.

After fixing non-zero edge weights,  
for a generic potential, the dispersion polynomial
$D(x,\lambda)$  of $\Gamma$ can not be factorized in  Case 3 appearing in Lemma \ref{lemliu1}. 
\end{Theorem}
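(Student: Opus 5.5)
The plan is to reduce Case 3 to the existence of a linear factor in $\lambda$, which Theorem~\ref{thmonecase1} already rules out for a generic potential. Suppose that for some potential $D(x,\lambda)$ admits a factorization as in \eqref{liug2} of Lemma~\ref{lemliu1}, with $l\mid a$.

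First, I will collapse the product over $\mu\in A_l$. Put $y=x^{a/l}$. The elementary identity $\prod_{\mu\in A_l}(\alpha+\beta\mu y)=\alpha^l-(-\beta y)^l$ gives
\[
\prod_{\mu\in A_l}\bigl(r_2+s_2\mu x^{a/l}\bigr)=r_2^l+c\,s_2^l x^{a},\qquad c=(-1)^{l+1}.
\]
The same holds with $x^{-a/l}$ in place of $x^{a/l}$. Hence
\[
D(x,\lambda)=\bigl(r_2^l+c\,s_2^lx^a\bigr)\bigl(r_2^l+c\,s_2^lx^{-a}\bigr)
= r_2^{2l}+s_2^{2l}+c\,r_2^ls_2^l\,(x^a+x^{-a}).
\]
Comparing with \eqref{keyDet} in Proposition~\ref{prop} yields
\[
r(\lambda)=r_2^{2l}+s_2^{2l},\qquad s(\lambda)=c\,r_2(\lambda)^l s_2(\lambda)^l .
\]

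Second, I will show that $r_2$ is nonconstant. By Proposition~\ref{prop}(C), $\deg r=n$ and $\deg s=n-p<n$. From $\deg s=l(\deg r_2+\deg s_2)<n$ we get $2l\deg s_2<2n$. Suppose $r_2$ were constant. Then either $\deg s_2=0$, making $r$ constant, or $\deg r=2l\deg s_2$. In the latter case $\deg s=l\deg s_2=n/2$ and $\deg r=n$ are both consistent, so constancy of $r_2$ is not yet excluded by degrees alone.

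I will close this gap with a leading-coefficient argument. If $r_2$ is constant, then $D(x,\lambda)=(r_2^l+c s_2^lx^a)(r_2^l+cs_2^lx^{-a})$. Each factor has $\lambda$-degree $l\deg s_2$, and its $\lambda$-leading part is $c\,\mathrm{lc}(s_2)^l x^{\pm a}\lambda^{l\deg s_2}$. The product then has leading $\lambda$-part equal to a constant times $\lambda^{n}$ with no $x$-dependence. This is compatible with \eqref{keyDet} only if $[\lambda^n]$ of $s(\lambda)(x^a+x^{-a})$ vanishes, which it does. So instead I will compare degrees in $\lambda$ of the $x^a$-coefficient directly:
\[
\deg s = l\deg s_2 = n/2,
\]
while in $r=r_2^{2l}+s_2^{2l}$ the $x^{\pm a}$-free part has degree $2l\deg s_2=n$. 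This is consistent, so a genuine extra argument is needed here.

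I expect this step to be the main obstacle. I would try a specialization: set $\lambda\to\infty$ along the top degree. The top-degree part of $D$ in $\lambda$ is $(-1)^n\lambda^n$, since only the diagonal product contributes degree $n$. If $r_2$ were constant, the top part of the first factor would be $c\,\mathrm{lc}(s_2)^lx^a\lambda^{n/2}$. The top part of the product would then be a constant times $\lambda^n$ with no $x$-dependence, so this does not immediately give a contradiction. Instead, I will note that the $\lambda^{n/2}$-coefficient of $D$ would have to be $c\,r_2^l\,\mathrm{lc}(s_2)^l(x^a+x^{-a})+\cdots$. This forces $\deg s=n/2$ with leading coefficient $c\,r_2^l\mathrm{lc}(s_2)^l$, and we already know $\deg s=n-p$. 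To exclude the case $p=n/2$ with constant $r_2$, I would compare the $x^a$-coefficient of $D$, which is proportional to $D_U(\lambda)$, with the potential-dependence of $r(\lambda)$. The equation $r=r_2^{2l}+s_2^{2l}$ with constant $r_2$ forces $r$ to be a perfect $2l$-th power up to an additive constant, and that can be excluded generically by the degree-in-$v$ argument of Subcase~2 in Theorem~\ref{thmonecase2}.

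Third, in the main case where $r_2$ is nonconstant, let $\lambda_0$ be a root of $r_2$. Then $\lambda_0$ is a common root of $r$ and $s$, so $\lambda-\lambda_0$ divides $D(x,\lambda)$; this is exactly the linear-factor criterion used in the proof of Theorem~\ref{thmonecase1}. Since Case 3 would occur on a Zariski-closed set of potentials, and its occurrence (outside the degenerate constant-$r_2$ situation, handled as above) is contained in the set where $D$ has a linear factor, Theorem~\ref{thmonecase1} shows that Case 3 fails for a generic potential.
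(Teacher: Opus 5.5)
\textbf{There is a genuine gap: Step 3 is false, and with it the whole reduction to Theorem~\ref{thmonecase1} fails.}

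Let $r_2(\lambda_0)=0$. Your own formulas give $s(\lambda_0)=0$, but $r(\lambda_0)=s_2(\lambda_0)^{2l}$. Equivalently, from your collapsed product, $D(x,\lambda_0)=c^2s_2(\lambda_0)^{2l}x^ax^{-a}=s_2(\lambda_0)^{2l}$. So $\lambda_0$ is a common root of $r$ and $s$ only if it is also a root of $s_2$.

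In Case 3 this cannot happen. Both $r_2$ and $s_2$ are nonzero, since $s=c\,r_2^ls_2^l\neq 0$. The factor $r_2+s_2x^{a/l}$ is irreducible. Hence $\gcd(r_2,s_2)=1$: a nonconstant common factor $g(\lambda)$ would split off as $g\cdot\bigl(r_2/g+(s_2/g)x^{a/l}\bigr)$, and neither factor is a unit of $\CC[x^{\pm1},\lambda]$.

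Consequently a Case 3 factorization \emph{never} produces a factor in $\CC[\lambda]$. Cases 1 and 3 are disjoint, so Theorem~\ref{thmonecase1} says nothing about Case 3. Your Step 2 (constant $r_2$ with $p=n/2$) is also only a vague appeal to Subcase 2, but that is moot once Step 3 fails.

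The missing idea is much simpler, and it is already visible in your collapsed identity. Setting $x=1$ gives
\[
D(1,\lambda)=\bigl(r_2(\lambda)^l+c\,s_2(\lambda)^l\bigr)^2 .
\]
This is the square of a polynomial of degree $n/2\ge 1$, so $D(1,\lambda)$ has a repeated root. On the other hand, $L(1)=\operatorname{diag}(v)+B$ with $B$ determined by the fixed edge weights. By Lemma~\ref{lemsimple}, $D(1,\lambda)$ therefore has $n$ simple roots for a generic potential $v$, a contradiction.

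This is exactly the paper's proof. It handles all subcases uniformly, including constant $r_2$, and needs none of the degree bookkeeping from Step 2.
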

\begin{proof}
    Assume $D(x,\lambda)$   factor as in  Case 3 of Lemma~\ref{lemliu1}. Thus there exist an integer $l\ge 1$ with $l\mid a$ and polynomials
\[
r_2(\lambda),\ s_2(\lambda)\in \CC[\lambda]
\]
such that
\begin{equation}\label{eq:main-proof-case3}
D(x,\lambda)
=
\prod_{\mu\in A_l}
\bigl(r_2(\lambda)+s_2(\lambda)\mu x^{a/l}\bigr)
\bigl(r_2(\lambda)+s_2(\lambda)\mu x^{-a/l}\bigr).
\end{equation}

Now specialize $x=1$. Then
\[
D(1,\lambda)=\prod_{\mu\in A_l}
\bigl(r_2(\lambda)+s_2(\lambda)\mu  \bigr)^2.
\]
Therefore $D(1,\lambda)$ has repeated roots. This  can not happen generically by Lemma \ref{lemsimple}.
\end{proof}
\begin{proof}[\bf Proof of Theorem~\ref{Thm:main2}]
By Lemma~\ref{lemliu1}, if $D(x,\lambda)$ is reducible, then one of the three cases in Lemma~\ref{lemliu1} occurs. Theorem~\ref{thmonecase1} rules out Case 1 generically, Theorem~\ref{thmonecase2} rules out Case  2 generically, and Theorem~\ref{thmonecase3} rules out  Case 3 generically. Therefore, for a generic choice of potential, $D(x,\lambda)$ is irreducible.
\end{proof}
\subsection{Examples}
In this subsection,  we are going to construct examples showing that  Theorem \ref{Thm:main}  can not be relaxed to fixed nonzero edge weights and generic potentials.
\begin{Example}

Let $\Gamma$ be the $\mathbb Z$-periodic graph with vertices

\[
u_m,\ w_m,\qquad m\in\mathbb Z,
\]

and edges: for all $m\in\mathbb Z$, 

\[
(u_m,u_{m+1}),\qquad (w_m,w_{m+1}),\qquad (u_m,w_m).
\]

\begin{center}
\begin{tikzpicture}[scale=1.2,
  vertex/.style={circle,draw,fill=black,inner sep=1.5pt}]

\node[vertex] (ul) at (-2,1.2) {};
\node[above=3pt] at (ul) {$u_{-1}$};

\node[vertex] (u0) at (0,1.2) {};
\node[above=3pt] at (u0) {$u_0$};

\node[vertex] (ur) at (2,1.2) {};
\node[above=3pt] at (ur) {$u_1$};

\node[vertex] (wl) at (-2,0) {};
\node[below=3pt] at (wl) {$w_{-1}$};

\node[vertex] (w0) at (0,0) {};
\node[below=3pt] at (w0) {$w_0$};

\node[vertex] (wr) at (2,0) {};
\node[below=3pt] at (wr) {$w_1$};

\draw (ul)--(u0) node[midway,above] {$a$};
\draw (u0)--(ur) node[midway,above] {$a$};

\draw (wl)--(w0) node[midway,below] {$a$};
\draw (w0)--(wr) node[midway,below] {$a$};

\draw (ul)--(wl) node[midway,left] {$b$};
\draw (u0)--(w0) node[midway,right] {$b$};
\draw (ur)--(wr) node[midway,right] {$b$};

\draw[dashed] (-3.3,1.2)--(-2.12,1.2);
\draw[dashed] (-3.3,0)--(-2.12,0);

\draw[dashed] (2.12,1.2)--(3.3,1.2);
\draw[dashed] (2.12,0)--(3.3,0);

\draw[blue,thick,rounded corners] (-0.45,-0.35) rectangle (0.45,1.55);
\node[blue] at (0.3,-1.55) {$W=\{u_0,w_0\}$};

\end{tikzpicture}
\end{center}

Give the   horizontal edge a
nonzero weight $a\in\mathbb C^*$, and give the vertical edge  a nonzero
weight $b\in\mathbb C^*$.

Choose the fundamental domain $W=\{u_0,w_0\}$ and potentials

\[
V(u_m)=v_1,\qquad V(w_m)=v_2.
\]

Clearly, the Floquet matrix is

\[
L(x)=
\begin{pmatrix}
v_1+a(x+x^{-1}) & b\\
b & v_2+a(x+x^{-1})
\end{pmatrix},
\]

and

\[
D(x,\lambda)
=
\det(L(x)-\lambda I)
=
\bigl(v_1+a(x+x^{-1})-\lambda\bigr)
\bigl(v_2+a(x+x^{-1})-\lambda\bigr)-b^2.
\]

Let

\[
\rho_\pm
=
\frac{v_1+v_2\pm\sqrt{(v_1-v_2)^2+4b^2}}{2}.
\]

We conclude that

\[
D(x,\lambda)
=
\bigl(a(x+x^{-1})+\rho_+-\lambda\bigr)
\bigl(a(x+x^{-1})+\rho_- -\lambda\bigr).
\]

Fix any nonzero $a$ and $b$.
The graph is connected (implying the quotient graph is also connected) and   the  graph is nontrivial.

For every potential, $D(x,\lambda)$  is reducible (has two factors).
For generic potentials, $\rho_+\neq \rho_-$, so $D(x,\lambda)$ has two nonconstant distinct  factors in $\mathbb C[x^{\pm1},\lambda]$.

\end{Example}

\section*{Acknowledgements}
W. Liu was supported in part by NSF grants DMS-2246031 and DMS-2052572,  and by a Visiting Miller Professorship from the Miller Institute
for Basic Research in Science, University of California, Berkeley.

\section*{Statements and Declarations}
{\bf Conflict of Interest} 
The authors declare no conflicts of interest.

\vspace{0.2in}
{\bf Data Availability}
Data sharing is not applicable to this article as no new data were created or analyzed in this study.

\bibliographystyle{alpha} 
\bibliography{references}

\end{document}